\documentclass{amsart}

\usepackage{amsmath,amssymb,amsthm}
\usepackage{mathtools}
\usepackage{enumitem}
\usepackage[colorlinks=true,linkcolor=blue,citecolor=blue,urlcolor=blue,backref=page]{hyperref}
\renewcommand*{\backrefalt}[4]{%
  \ifcase #1 \or $\uparrow$\,#2\else $\uparrow$\,#2\fi}

\usepackage{tikz}
\usetikzlibrary{cd,calc,matrix,arrows}
\usepackage{orcidlink}

\theoremstyle{plain}
\newtheorem{theorem}{Theorem}[section]
\newtheorem{lemma}[theorem]{Lemma}

\newtheorem{corollary}[theorem]{Corollary}

\theoremstyle{definition}
\newtheorem{definition}[theorem]{Definition}
\newtheorem{remark}[theorem]{Remark}
\newtheorem{question}[theorem]{Question}

\newtheorem{theoremalph}{Theorem}

\newcommand{\C}{\mathbb{C}}
\newcommand{\F}{\mathbb{F}}
\newcommand{\Q}{\mathbb{Q}}
\newcommand{\Z}{\mathbb{Z}}
\newcommand{\A}{\mathcal{A}}
\newcommand{\VV}{\mathcal{V}}
\newcommand{\RR}{\mathcal{R}}
\newcommand{\m}{\mathfrak{m}}
\renewcommand{\k}{\Bbbk}

\DeclareMathOperator{\Hom}{Hom}
\DeclareMathOperator{\ab}{ab}
\DeclareMathOperator{\gr}{gr}
\DeclareMathOperator{\depth}{depth}
\DeclareMathOperator{\Ind}{Ind}

\DeclareMathOperator{\ch}{char}
\DeclareMathOperator{\Gal}{Gal}
\DeclareMathOperator{\CH}{CH}
\DeclareMathOperator{\rf}{rf}

\newcommand{\surj}{\twoheadrightarrow}
\newcommand{\inj}{\hookrightarrow}
\newcommand\isom{\xrightarrow{ \,\smash{\raisebox{-0.3ex}{\ensuremath{\scriptstyle\simeq}}}\,}}

\begin{document}

\title{Profinite completions and cohomology jump loci}

\author[Alexander~I.~Suciu]{Alexander~I.~Suciu\,\orcidlink{0000-0002-5060-7754}}
\address{Department of Mathematics, Northeastern University,
Boston, MA 02115}
\email{\href{mailto:a.suciu@northeastern.edu}{a.suciu@northeastern.edu}}
\urladdr{\href{https://suciu.sites.northeastern.edu}%
{suciu.sites.northeastern.edu}}


\subjclass[2020]{Primary 
55N25. 
Secondary 
14F35, 
20E18, 
20F14, 
20F18, 
20J05, 
32S22. 
}
\keywords{Cohomology jump loci, characteristic variety, profinite
completion, good group, lower central series, Chen groups, hyperplane
arrangement, Zariski pair}

\begin{abstract}
Let $X$ be a connected finite-type CW-complex with fundamental group $G$.
We show that the profinite completion $\widehat G$ determines the
cohomology jump loci $\VV^q_s(X,\C)$, under two hypotheses treated
separately: that the loci are finite unions of torsion-translated subtori,
which holds for smooth quasi-projective varieties, and that $\widehat G$
determines the Betti numbers of the finite cyclic covers of $X$ in degrees
$\le q$, which holds unconditionally for $q=1$ and in all degrees when $X$
is aspherical and $G$ is good in the sense of Serre.  We show also that
$\widehat G$ determines the graded abelian groups $\gr_r(G/W(G))$, torsion
included, for every verbal subgroup $W(G)$; the cases $W(G)=1$ and
$W(G)=G''$ give the lower central series quotients and the Chen groups.
For hyperplane arrangements $\A$, it follows that no arithmetic Zariski pair 
is distinguished by any of these invariants, while two known lattice-isomorphic 
pairs show that the profinite completion of the arrangement group $G(\A)$ 
is not combinatorially determined, and does not determine $G(\A)$.
\end{abstract}

\maketitle

\section{Introduction}
\label{sect:intro}

\subsection{Jump loci}
\label{subsec:jumploci}

Let $X$ be a connected CW-complex with finite $k$-skeleton, and let
$G = \pi_1(X,x_0)$.  For a field $\k$, a character
$\rho \in \Hom(G,\k^\times)$ determines a rank-one local system $\k_\rho$
on $X$, and the \emph{characteristic varieties} of $X$ are the Zariski
closed sets
\begin{equation}
\label{eq:cv-def}
\VV^q_s(X,\k) = \bigl\{\rho \in \Hom(G,\k^{\times})
: \dim_{\k} H_q(X;\k_\rho) \ge s \bigr\},
\qquad 0 \le q \le k,\ s \ge 1 .
\end{equation}
We write $\depth^q_\k(\rho) = \max\{s : \rho \in \VV^q_s(X,\k)\}$ for the
\emph{depth} of a character, so that $\depth^q_\k(\rho) =
\dim_\k H_q(X;\k_\rho)$ and, in particular, $\depth^q_\k(\mathbf 1)
= \dim_\k H_q(X;\k)$.  We suppress $\k$ when it is $\C$.

Two features of \eqref{eq:cv-def} are used throughout.  First, the loci are
cut out by determinantal conditions on equations with integer
coefficients, so each $\VV^q_s(X,\k)$ is the set of $\k$-points of an
affine scheme defined over $\Z$; see \cite[\S2.1]{Denham-Suciu-plms14}.
Consequently $\VV^q_s(X,\C)$ is defined over $\Q$, and its set of torsion
points is stable under the action of $\Gal(\overline\Q/\Q)$
permuting roots of unity \cite[Rem.~2.7]{Denham-Suciu-plms14}.  Second, in
degree $q=1$ the loci depend only on $G$ --- indeed only on its maximal
metabelian quotient $G/G''$ --- whereas for $q \ge 2$ they genuinely depend
on the space.  This is why we work with spaces rather than groups from the
outset; it is also the reason the two halves of the argument below behave
so differently in degree one and above it.

\subsection{The two hypotheses}
\label{subsec:hyps}

The first hypothesis concerns the shape of the loci.

\medskip
\noindent
\textbf{Hypothesis $\mathrm{T}_k$.}
\emph{For all $q \le k$ and $s \ge 1$, the variety $\VV^q_s(X,\C)$ is a
finite union of sets of the form $\rho T$, with
$T \subseteq \Hom(G,\C^\times)$ an algebraic subtorus and $\rho$ a torsion
character.}
\medskip

Hypothesis $\mathrm{T}_k$ holds for every $k$ when $X$ is a smooth
quasi-projective variety, by Arapura \cite{Arapura-jag97} in degree one,
and in all degrees and depths by Budur--Wang \cite{Budur-Wang-ens15}.  In particular it holds for
arrangement complements, where the degree-one case is
\cite[Thm.~5.4]{Denham-Suciu-plms14}.

The second hypothesis is the one that will be seen to carry all the
degree-sensitivity of the problem.  Stating it requires transporting
finite quotients along an isomorphism of completions, which we do by an
explicit operator rather than by a correspondence.

For a group $G$ we write $\alpha_G \colon G \to \widehat G$ for the
canonical morphism to its profinite completion, abbreviated to $\alpha$
when $G$ is clear from the context, and to $\alpha_i$ when $G = G_i$.  Let
$\phi \colon \widehat{G_1} \to \widehat{G_2}$ be an isomorphism of
profinite groups, and let $F$ be a finite group.  By the universal
property of the profinite completion, every homomorphism
$\lambda \colon G_2 \to F$ extends uniquely to a continuous homomorphism
$\widehat\lambda \colon \widehat{G_2} \to F$ with
$\widehat\lambda \circ \alpha_2 = \lambda$.  We may therefore \emph{define}
\begin{equation}
\label{eq:transport}
\lambda^\phi \coloneqq \widehat\lambda \circ \phi \circ \alpha_1
\colon G_1 \longrightarrow F,
\end{equation}
that is, the composite around the outside of the diagram
\[
\begin{tikzcd}[column sep=22pt, row sep=18pt]
G_1 \ar[r, "\alpha_1"] \ar[drr, dashed, "\lambda^\phi"'] 
& \widehat{G_1}  \ar[r, "\phi"]& \widehat{G_2}\phantom{.}
 \ar[d, "\widehat{\lambda}"] \\
& &  F .
\end{tikzcd}
\]

\begin{lemma}
\label{lem:transport}
Let $F$ be a finite group.  Then
$\lambda \mapsto \lambda^\phi$ is a bijection
$\Hom(G_2,F) \to \Hom(G_1,F)$, with inverse
$\mu \mapsto \mu^{\phi^{-1}}$.  It carries epimorphisms to epimorphisms.
\end{lemma}

\begin{proof}
Uniqueness of continuous extensions gives
$\widehat{\lambda^\phi} = \widehat\lambda \circ \phi$, whence
$(\lambda^\phi)^{\phi^{-1}} = \widehat\lambda \circ \phi \circ \phi^{-1}
\circ \alpha_2 = \lambda$, and symmetrically; so the two assignments are
mutually inverse.  If $\lambda$ is onto then so is $\widehat\lambda$, hence
so is $\widehat\lambda \circ \phi$; its kernel is open, and
$\alpha_1(G_1)$ is dense in $\widehat{G_1}$, so
$\alpha_1(G_1) \cdot \ker(\widehat\lambda \circ \phi) = \widehat{G_1}$ and
$\lambda^\phi$ is onto.
\end{proof}

Applying \eqref{eq:transport} with $F = \Z_n$ transports cyclic quotients,
and hence covers: for $\lambda \colon G_2 \surj \Z_n$ we
write $X_2^{\lambda} \to X_2$ and $X_1^{\lambda^\phi} \to X_1$ for the
associated regular cyclic covers.

\begin{definition}
\label{def:CH}
Let $k \ge 1$.  We say that $\phi \colon \widehat{G_1} \isom \widehat{G_2}$ 
\emph{matches homology of covers through degree $k$}, and write 
$\CH_k(\phi)$, if
\[
\dim_\C H_q\bigl(X_1^{\lambda^\phi};\C\bigr)
= \dim_\C H_q\bigl(X_2^{\lambda};\C\bigr)
\]
for every $q \le k$, every $n \ge 1$, and every epimorphism
$\lambda \colon G_2 \surj \Z_n$.
\end{definition}

Taking $n=1$ shows that $\CH_k(\phi)$ includes the equality of the
ordinary Betti numbers $b_q(X_1) = b_q(X_2)$ for $q \le k$.

The same operator \eqref{eq:transport} transports torsion characters, at no
extra cost: a character $\rho \colon G_2 \to \C^\times$ of finite order
has image in the group $\mu_\infty$ of roots of unity, and its image is a
finite cyclic group, so $\rho^\phi$ is defined by the very same formula.
By Lemma \ref{lem:transport}, the assignment $\rho \mapsto \rho^\phi$ is a bijection
between the torsion characters of $G_2$ and those of $G_1$, preserving
order.  This is the map that Theorem \ref{thm:A} shows to preserve depth.

\begin{remark}
\label{rem:rigidify}
On \emph{torsion} characters the operator $\rho \mapsto \rho^\phi$ is
canonical, and requires no choices.  To compare the jump loci themselves
(subvarieties of two different character tori), one needs in addition
an identification of those tori, and this is a genuine extra datum: 
the map $\phi \colon \widehat{G_1} \isom \widehat{G_2}$ induces an isomorphism
$(G_1)_{\ab} \otimes \widehat\Z \cong (G_2)_{\ab} \otimes \widehat\Z$ of
$\widehat\Z$-modules, and such an isomorphism need not carry $(G_1)_{\ab}$
onto $(G_2)_{\ab}$.  We therefore say that $\phi$ is \emph{rigidified} by
an isomorphism $\psi \colon (G_1)_{\ab} \to (G_2)_{\ab}$ of abelian groups
if the isomorphism induced by $\phi$ is $u\cdot(\psi \otimes \widehat\Z)$
for some unit $u \in \widehat\Z^{\times}$; in that
case $\psi$ induces an isomorphism of algebraic tori
$\psi^{*} \colon \Hom(G_2,\C^\times) \to \Hom(G_1,\C^\times)$, and a short
computation gives $\rho^\phi = (\psi^{*}\rho)^{u}$ for every torsion character
$\rho$, where for a character $\tau$ of finite order $m$ we write
$\tau^{u} = \tau^{u'}$ for any integer $u' \equiv u \bmod m$. 
The unit cannot be dispensed with: for Galois-conjugate arrangements the
isomorphism of completions multiplies meridians by the cyclotomic character
of the conjugating automorphism (see the proof of Corollary
\ref{cor:galois}); already for $\C^\times$ and complex conjugation it is
inversion.  In all the applications below, a rigidification is at hand and
canonical: for Galois-conjugate arrangements it is induced by the 
correspondence $H \mapsto H^\sigma$ on meridian bases, and 
for lattice-isomorphic arrangements by the resulting bijection of hyperplanes.
\end{remark}

\subsection{Results}
\label{subsec:results}

The engine of the paper is the following result.  It says nothing about profinite
groups; rather, it converts information on the homology of regular, finite cyclic 
covers into information on the jump loci, in any degree.

\begin{theoremalph}
\label{thm:A}
Let $X_1,X_2$ be connected CW-complexes with finite $k$-skeleton,
for some $k\ge 1$, both satisfying Hypothesis $\mathrm{T}_k$, and set
$G_i = \pi_1(X_i)$.  Let $\phi \colon \widehat{G_1} \to \widehat{G_2}$ be
an isomorphism satisfying $\CH_k(\phi)$.
\begin{enumerate}[label=\textup{(\roman*)}]
\item \label{A-depth}
For every torsion character $\rho$ of $G_2$ and every $q \le k$,
\[
\depth^q_\C\bigl(\rho^\phi\bigr) = \depth^q_\C(\rho) .
\]
\item \label{A-loci}
If moreover $\phi$ is rigidified by
$\psi \colon (G_1)_{\ab} \to (G_2)_{\ab}$, in the sense of Remark
\textup{\ref{rem:rigidify}}, then
\[
\psi^{*}\bigl(\VV^q_s(X_2,\C)\bigr) = \VV^q_s(X_1,\C)
\qquad \text{for all } q \le k \text{ and } s \ge 1 .
\]
\end{enumerate}
\end{theoremalph}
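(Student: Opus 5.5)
The plan is to turn the equality of Betti numbers of cyclic covers in $\CH_k(\phi)$ into an equality of depths of individual torsion characters. Two standard facts do the work: the eigenspace decomposition of the homology of a finite cyclic cover, and Galois invariance of depth. Hypothesis $\mathrm{T}_k$ is used only at the end, in part \ref{A-loci}, to pass from torsion points to the whole locus.

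\emph{Step 1 (decomposition).} Let $\lambda\colon G\surj\Z_n$ be an epimorphism and $X^\lambda\to X$ the associated cover. Then $H_q(X^\lambda;\C)\cong H_q(X;\C[\Z_n]_\lambda)$ by Shapiro's lemma. Since $\C[\Z_n]$ is semisimple, it splits as $\bigoplus_{\xi}\C_\xi$ over the characters $\xi$ of $\Z_n$. Hence
\[
b_q(X^\lambda)=\sum_{\xi\in\Hom(\Z_n,\C^\times)}\depth^q(\xi\circ\lambda).
\]
The characters $\xi\circ\lambda$ are exactly the torsion characters of $G$ that factor through $\lambda$.

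Fix a torsion character $\rho$ of $G_2$ of order $n$. Write $\rho=\xi_0\circ\lambda$ with $\lambda\colon G_2\surj\Z_n$ and $\xi_0$ faithful. For $d\mid n$, let $\lambda_d$ be $\lambda$ followed by the projection $\Z_n\surj\Z_d$. Möbius inversion over the divisors of $n$ then gives
\[
\sum_{\tau\ \text{generates}\ \langle\rho\rangle}\depth^q(\tau)=\sum_{d\mid n}\mu(n/d)\,b_q\bigl(X_2^{\lambda_d}\bigr).
\]
Here the sum on the left runs over all characters of order exactly $n$ factoring through $\lambda$. The transport operator is natural under post-composition: $(\pi\circ\lambda)^\phi=\pi\circ\lambda^\phi$ for any $\pi\colon\Z_n\to F$, since $\widehat{\pi\circ\lambda}=\pi\circ\widehat\lambda$. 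Consequently $(\lambda_d)^\phi=(\lambda^\phi)_d$ and $(\xi\circ\lambda)^\phi=\xi\circ\lambda^\phi$. By Lemma \ref{lem:transport} and $\CH_k(\phi)$, the right-hand side is the same for $X_1,\lambda^\phi$ as for $X_2,\lambda$. So the sum of depths over the generators of $\langle\rho\rangle$ equals the corresponding sum over the generators of $\langle\rho^\phi\rangle$.

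\emph{Step 2 (Galois invariance).} The generators of $\langle\rho\rangle$ are the $\rho^j$ with $\gcd(j,n)=1$, and these form one orbit of $\Gal(\Q(\zeta_n)/\Q)$. The twisted chain complex $C_\bullet(\widetilde X)\otimes_{\Z G}\C_\rho$ is defined over $\Q(\zeta_n)$, because $X$ has finite $k$-skeleton. Applying $\sigma$ to its matrix entries yields the complex for $\sigma\rho$, and ranks of matrices are preserved by field automorphisms. Hence $\depth^q(\sigma\rho)=\depth^q(\rho)$; this is the Galois stability recalled in \S\ref{subsec:jumploci}, made depth-wise. Both sums in Step 1 therefore consist of $\varphi(n)$ equal terms. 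Dividing by $\varphi(n)$ gives \ref{A-depth}.

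\emph{Step 3 (loci).} Under the rigidification, Remark \ref{rem:rigidify} gives $\rho^\phi=(\psi^*\rho)^u$, and $(\psi^*\rho)^u$ is a Galois conjugate of $\psi^*\rho$. Combining \ref{A-depth} with Step 2 gives $\depth^q(\psi^*\rho)=\depth^q(\rho)$ for every torsion $\rho$. Since $\psi^*$ is a bijection on torsion characters, $\psi^*$ carries the torsion points of $\VV^q_s(X_2)$ exactly onto those of $\VV^q_s(X_1)$. By Hypothesis $\mathrm{T}_k$, each locus is a finite union of torsion translates $\rho T$ of subtori, and torsion points are Zariski dense in each such translate. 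So each $\VV^q_s(X_i)$ is the Zariski closure of its torsion points. Because $\psi^*$ is an isomorphism of algebraic groups, it commutes with Zariski closure, and this gives \ref{A-loci}.

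I expect the main obstacle to be the bookkeeping in Step 1. One must check that the decomposition and the Möbius inversion are genuinely compatible with transport, including for non-surjective quotients. One must also check that the orders of $\rho$ and $\rho^\phi$ agree, so that the same $\varphi(n)$ divides both sums.
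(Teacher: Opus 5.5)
Your proposal is correct and follows the paper's proof of Lemma~\ref{lem:depth} and Theorem~\ref{thm:A}. You invert the cyclic-cover formula by M\"obius inversion over the divisors of the order, transport via $(\lambda_d)^\phi=(\lambda^\phi)_d$ and $\CH_k(\phi)$, and pass to Zariski closures of torsion points using the rigidification and Galois stability; the only difference is cosmetic, since the paper builds the Galois grouping into \eqref{eq:cover} before inverting, whereas you invert the full sum \eqref{eq:cover-all} and divide by $\varphi(n)$ afterwards.
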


The second result supplies the hypothesis $\CH_k$.  Part
\ref{B-one} is unconditional; part \ref{B-all} is where goodness enters.
Recall from Serre \cite{Serre-97} that a group $G$ is \emph{good} if
$\alpha_G^{*} \colon H^{*}(\widehat G; M) \to H^{*}(G; M)$ is an isomorphism
for every $G$-module $M$ that is finite as a set.

\begin{theoremalph}
\label{thm:B}
Let $\phi \colon \widehat{G_1} \to \widehat{G_2}$ be an isomorphism, where
$G_i = \pi_1(X_i)$ are finitely generated.
\begin{enumerate}[label=\textup{(\roman*)}, itemsep=1.5pt]
\item \label{B-one}
$\CH_1(\phi)$ holds, always.
\item \label{B-all}
If both $X_i$ are aspherical and of finite type, and both $G_i$ are good,
then $\CH_k(\phi)$ holds for every $k$.
\end{enumerate}
\end{theoremalph}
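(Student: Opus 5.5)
Both parts come down to one observation: the transport operator of Lemma \ref{lem:transport} identifies finite-index subgroups on the two sides together with their profinite completions. Fix an epimorphism $\lambda \colon G_2 \surj \Z_n$ and set $K_2 = \ker\lambda$ and $K_1 = \ker\lambda^\phi$. Then $X_i^{\lambda}$ (resp.\ $X_1^{\lambda^\phi}$) is the covering space with fundamental group $K_i$. Since $\widehat{\lambda^\phi} = \widehat\lambda\circ\phi$, the isomorphism $\phi$ carries the open subgroup $\overline{\alpha_1(K_1)} = \ker(\widehat\lambda\circ\phi)$ onto $\overline{\alpha_2(K_2)} = \ker\widehat\lambda$. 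For finite-index subgroups of finitely generated groups, the closure $\overline{\alpha_i(K_i)}$ is canonically $\widehat{K_i}$, because the profinite topology of $G_i$ induces that of $K_i$. So $\phi$ restricts to an isomorphism $\widehat{K_1}\isom\widehat{K_2}$, and the $K_i$ are again finitely generated.

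For part \ref{B-one}, $H_0$ of a connected cover is $\C$ on both sides, so only $b_1(X_1^{\lambda^\phi}) = b_1(K_1)$ needs checking. I would show that $b_1$ of a finitely generated group $K$ is read off from $\widehat K$. Indeed, $\Hom(K,\Z_m)=\Hom_{\mathrm{cts}}(\widehat K,\Z_m)$ for every $m$, and writing $K_{\ab}\cong\Z^{b}\oplus T$ with $T$ finite, $|\Hom(K,\Z_p)| = p^{\,b+\dim_{\F_p}(T/pT)}$. Taking $p$ prime to $|T|$ recovers $b = b_1(K;\C)$. Alternatively, $\widehat K_{\ab}\cong\widehat\Z^{\,b}\oplus T$, whose $\widehat\Z$-rank is $b$. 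Applying this to $K_1$, $K_2$ with the isomorphism above gives $\CH_1(\phi)$.

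For part \ref{B-all}, the covers $X_i^{\lambda}$ are again aspherical of finite type, with fundamental groups $K_i$. Goodness passes to finite-index subgroups (Serre), so the $K_i$ are good. Hence, for every prime $p$ and every $q$, $H^q(K_i;\F_p)\cong H^q(\widehat{K_i};\F_p)$, and these agree for $i=1,2$ via $\phi$. Therefore $\dim_{\F_p} H^q(X_1^{\lambda^\phi};\F_p)=\dim_{\F_p}H^q(X_2^{\lambda};\F_p)$ for all $p$ and $q$.

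The main obstacle is passing from $\F_p$-Betti numbers to rational ones, since goodness only controls finite coefficients. I would handle it with the universal coefficient theorem. A finite-type complex has finitely generated integral homology, hence only finitely many primes occur as torsion. Choosing $p$ larger than all torsion primes on both sides in degrees $\le k+1$ makes $\dim_{\F_p}H^q(-;\F_p)=b_q(-;\Q)$. One such $p$ per pair $(\lambda,q)$ suffices, which yields $\CH_k(\phi)$ for every $k$. If uniformity in $p$ caused trouble, I would fall back on comparing $H^q(\widehat{K_i};\Z/p^r)$ for all $r$ and reading off ranks of $H^q(K_i;\Z)\otimes\Z_p$; but the large-prime argument should already suffice.
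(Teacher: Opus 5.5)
Your proof is correct. Part (i) is the paper's argument: the paper also identifies the covers with the kernels of corresponding epimorphisms and applies Lemma~\ref{lem:fi}\ref{fi-det}, which rests on $\overline{\alpha(N)}\cong\widehat N$ and on recovering a finitely generated abelian group from $N_{\ab}\otimes\widehat\Z$.

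In part (ii) the two routes differ in where Shapiro's lemma enters. The paper never passes to the subgroup on the profinite side. It applies goodness of $G_i$ itself to the finite module $M=\F_p[\Z_n]=\Ind_N^G\F_p$, and uses Shapiro only for the discrete group, $H^q(G;M)\cong H^q(N;\F_p)$. It then transports $H^q(\widehat{G_2};M)\cong H^q(\widehat{G_1};M)$ along $\phi$, the two modules being pulled back along $\widehat\lambda$ and $\widehat\lambda\circ\phi$.

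You instead restrict $\phi$ to the open subgroups $\widehat{K_1}\cong\widehat{K_2}$ and invoke heredity of goodness to finite-index subgroups. That heredity is itself proved by running the same Shapiro argument on both the discrete and the profinite side, so the two arguments are equivalent in substance. The paper's version is more economical: it does not need the identification $\overline{\alpha(K)}\cong\widehat K$ at this stage, and it makes visible that only the modules $\F_p[\Z_n]$ are ever used. Yours treats (i) and (ii) uniformly through the single isomorphism $\widehat{K_1}\cong\widehat{K_2}$, at the cost of citing a standard lemma.

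The final step is the same in both. Choosing a prime beyond all torsion primes is the paper's observation that $\operatorname{rank} H_q=\min_p\dim_{\F_p}H^q(-;\F_p)$. By universal coefficients only $H_q$ and $H_{q-1}$ matter there, so degrees $\le k$ suffice rather than $\le k+1$.
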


Therefore, under Hypothesis $\mathrm{T}_1$, the profinite completion 
determines $\VV^1_s$, and under the hypotheses of Theorem
\ref{thm:B}\ref{B-all} it determines $\VV^q_s$ in every degree.  
We emphasize that Theorem \ref{thm:A} is stated so that goodness 
is not the only possible entry point: any argument establishing
$\CH_k$ for a restricted class of spaces, or in a single degree
$q\ge 2$ (the proof of Lemma \ref{lem:depth} proceeds one degree
at a time), or by means having nothing to do with Serre's condition, 
feeds into it unchanged.

The third result is of a different nature: it concerns not the jump loci
but the nilpotent quotients, and it is what the applications to
arrangements require alongside Theorem \ref{thm:A}.  Recall that a set $W$
of words (elements of a free group on $x_1,x_2,\dots$) determines, in
any group $G$, the \emph{verbal subgroup} $W(G)$ generated by the values
$w(g_1,\dots,g_n)$ with $w \in W$ and $g_i \in G$.  The two cases used
below are $W = \varnothing$, with $W(G) = \{1\}$, and
$W = \{[[x_1,x_2],[x_3,x_4]]\}$, with $W(G) = G''$, so that $G/W(G)$ is the
maximal metabelian quotient of $G$.

\begin{theoremalph}
\label{thm:C}
Let $G_1,G_2$ be finitely generated groups with
$\widehat{G_1} \cong \widehat{G_2}$, and let $W$ be any set of words.
Then, for every $r \ge 1$,
\[
\gr_r\bigl(G_1/W(G_1)\bigr) \cong \gr_r\bigl(G_2/W(G_2)\bigr)
\]
as abelian groups.
\end{theoremalph}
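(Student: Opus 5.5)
Set $N_i = G_i/W(G_i)$ and $\Gamma_r(N)$ for the lower central series, so that $\gr_r(N) = \Gamma_r(N)/\Gamma_{r+1}(N)$. The plan is to show that the finitely generated nilpotent group $N/\Gamma_{r+1}(N)$ has profinite completion determined by $\widehat G$, and then read off $\gr_r$ from it. Nilpotent groups are residually finite and their completions are well behaved, but nilpotent groups are not determined by their profinite completions (genus). We therefore avoid claiming the nilpotent quotients are isomorphic and compare only the abelian layers $\gr_r$. Finitely generated abelian groups are determined by their profinite completions, since $A\otimes\widehat\Z$ recovers the rank and torsion.

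\textbf{Step 1: transport of the quotients.} Set $Q_i = G_i/W(G_i)\Gamma_{r+1}(G_i)$. This equals $N_i/\Gamma_{r+1}(N_i)$, because the image of $\Gamma_{r+1}(G_i)$ in $N_i$ is $\Gamma_{r+1}(N_i)$. The group $Q_i$ is finitely generated nilpotent, hence residually finite, and its finite quotients are exactly the finite quotients $F$ of $G_i$ that satisfy the laws $W$ and are nilpotent of class $\le r$. A finite quotient of $G_i$ satisfies these laws iff the quotient map kills $W(G_i)\Gamma_{r+1}(G_i)$, since the image of a verbal subgroup is the verbal subgroup of the image. By Lemma~\ref{lem:transport}, $\lambda\mapsto\lambda^\phi$ gives a bijection of epimorphisms $G_2\surj F$ and $G_1\surj F$ for each finite $F$, and the condition "$F$ satisfies $W$ and has class $\le r$" concerns only $F$. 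Hence $Q_1$ and $Q_2$ have the same finite quotients. Both are finitely generated, so $\widehat{Q_1}\cong\widehat{Q_2}$ by the standard Dixon–Formanek–Poland–Ribes criterion. Concretely, $\widehat{Q_i}=\widehat{G_i}/\overline{W(\widehat{G_i})\Gamma_{r+1}(\widehat{G_i})}$, and $\phi$ preserves this closed characteristic subgroup.

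\textbf{Step 2: layers of the completion.} For a finitely generated nilpotent group $Q$, I would show $\widehat{\Gamma_r(Q)} = \overline{\Gamma_r(\widehat Q)}$ and $\widehat Q$ induces the full profinite topology on $Q$ and its subgroups. All subgroups of finitely generated nilpotent groups are finitely generated and separable, so the profinite topology on $\Gamma_r(Q)$ is induced from $Q$. Exactness of completion on the extension $1\to\Gamma_r(Q)\to Q\to Q/\Gamma_r(Q)\to1$ then gives $\overline{\Gamma_r(\widehat Q)}\cong\widehat{\Gamma_r(Q)}$. Taking $r$ and $r+1$ yields $\gr_r(Q)\otimes\widehat\Z\cong\overline{\Gamma_r(\widehat Q)}/\overline{\Gamma_{r+1}(\widehat Q)}$. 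Here $\Gamma_{r+1}(Q)=1$, so this is simply $\overline{\Gamma_r(\widehat Q)}$, which is intrinsic to the profinite group $\widehat Q$. Thus $\gr_r(N_1)\otimes\widehat\Z\cong\gr_r(N_2)\otimes\widehat\Z$.

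\textbf{Step 3: conclusion.} A finitely generated abelian group $A$ is recovered from $A\otimes\widehat\Z=\prod_p A\otimes\Z_p$. The free rank and the $p$-primary torsion can be read off prime by prime, so $\gr_r(N_1)\cong\gr_r(N_2)$.

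The main obstacle is Step 2: showing that the closure of the abstract lower central series term in $\widehat Q$ coincides with the completion of $\Gamma_r(Q)$, with no torsion lost or gained. I would argue via separability of subgroups of finitely generated nilpotent groups (Mal'cev). For finitely generated $Q$, $\Gamma_r(\widehat Q)$ is automatically closed by Nikolov–Segal, though here nilpotence makes this elementary, since $\Gamma_r$ is generated by finitely many iterated commutators of generators modulo higher terms. This step also keeps the torsion of $\gr_r$ intact, as the statement requires.
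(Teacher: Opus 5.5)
Your proposal is correct and is essentially the paper's proof. The paper likewise passes to the finitely generated nilpotent quotient $Q=G/W(G)\gamma_{r+1}(G)$ and identifies $\widehat Q$ with $\widehat G/\overline{W(\widehat G)\gamma_{r+1}(\widehat G)}$; this is your ``concretely'' remark, which the paper proves, so it does not need Dixon--Formanek--Poland--Ribes. It then uses Mal'cev's theorem to show that the kernel of $\widehat Q\to\widehat{Q/\gamma_r(Q)}$ is $\gamma_r(Q)\otimes\widehat\Z$, and recovers the finitely generated abelian group from its tensor product with $\widehat\Z$. The one step you leave implicit, that this kernel $\overline{\gamma_r(Q)}$ coincides with the intrinsic subgroup $\overline{\gamma_r(\widehat Q)}$, is supplied by the density Lemma~\ref{lem:topgp}.
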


Taking $W = \varnothing$, the profinite completion determines the lower
central series quotients $\gr_r(G) = \gamma_r(G)/\gamma_{r+1}(G)$, torsion
included; taking $W(G) = G''$, it determines the Chen groups
$\gr_r(G/G'')$, and with them the graded pieces of every truncation of the
Alexander invariant of $G$.  Both are used in Section
\ref{sect:arrangements}.  It is the graded pieces, and not the quotients
$G/W(G)\gamma_{r+1}(G)$ from which they are formed, that the completion
retains: by Pickel's theorem \cite{Pickel-tams71}, those quotients are determined 
only up to finitely many possibilities (Remark \ref{rem:pickel}), and Corollary
\ref{cor:acgm} exhibits a pair of arrangement groups with isomorphic
completions that differ in the module structure of a truncated Alexander
invariant (compatibly with the meridians) while agreeing in all of its graded pieces.

The results have three limitations.  The restriction to
\emph{characteristic zero} enters through the covering formula
\eqref{eq:cover}: over $\C$ the characters of a given order factoring
through a cyclic quotient are Galois conjugate, hence of equal depth, and
this is what makes the inversion in Lemma \ref{lem:depth} possible.  Over
$\overline{\F}_p$, the Frobenius orbits are in general smaller, and dimensions
of covers no longer isolate individual depths; the density step, by
contrast, becomes vacuous there, every $\overline{\F}_p$-point of the
character torus being torsion. The \emph{asphericity} hypothesis in
Theorem \ref{thm:B}\ref{B-all} is not cosmetic: goodness is a statement
about the cohomology of $G$, and asphericity is what identifies it with
the cohomology of $X$ and of its finite covers. Finally, the converse fails, 
uninterestingly:  the jump loci plainly do not determine $\widehat G$.

We stress that none of the three theorems requires $G$ to be residually
finite.  Theorems \ref{thm:A} and \ref{thm:B} use only the correspondence
between finite-index subgroups of $G$ and open subgroups of $\widehat G$,
where the subtleties attaching to subgroups of infinite index (Remark
\ref{rem:derived}) do not arise; Theorem \ref{thm:C} completes only
subgroups that are of finite index or central in a finitely generated
nilpotent group, for the same reason.  And none of them ever compares $G$
with $\widehat G$, but rather two groups with isomorphic completions, so
the possible failure of $\alpha_G$ to be injective is invisible to them.

\subsection{Arrangements}
\label{subsec:arr-intro}

Section \ref{sect:arrangements} applies all of this to complements of
complex hyperplane arrangements, where the questions of combinatorial
determination are sharpest, and where the three theorems combine into a
dictionary of what the profinite completion of an arrangement group does and does 
not see.  On the positive side, Galois-conjugate arrangements have isomorphic 
profinite completions, hence identical jump loci, identical lower central series
quotients and identical Chen groups, torsion included (Corollary
\ref{cor:galois}); consequently no \emph{arithmetic} Zariski pair can be
distinguished by any of these invariants (Corollary \ref{cor:no-go}), and
only the truncated lattice $L_{\le 2}(\A)$ is ever relevant (Remark
\ref{rem:truncated}).  On the negative side, two known pairs mark out the
two ways a separation can escape.  The lower central series computation of
\cite{Artal-Guerville-ViuSos-expmath20} produces a lattice-isomorphic pair
of arrangements whose profinite completions are provably non-isomorphic
(Corollary \ref{cor:agv}), which answers Question
\ref{quest:profinite-comb} in the negative; while the arithmetic Zariski
pair of \cite{ACGM-racsam17} consists of two arrangement groups that are
not isomorphic although their completions are (Corollary
\ref{cor:acgm}), so that $G(\A)$ is not determined by
$\widehat{G(\A)}$.
The first separation is a difference of torsion in an abelian graded piece,
which Theorem \ref{thm:C} converts into a profinite invariant; the
second is an obstruction over $\Z$ that the completion evades by a
cyclotomic twist of the meridians. Corollary \ref{cor:no-go} says that 
a separating invariant has no third option.

\section{From homology of covers to jump loci}
\label{sect:engine}

This section proves Theorem \ref{thm:A}.  Nothing in it is degree-sensitive.

\subsection{The covering formula, and its inversion}
\label{subsec:inversion}

The input from the covering side is a standard formula expressing the
Betti numbers of a finite cyclic cover through the depths of characters.
Let $\lambda \colon G \surj \Z_n$ be an epimorphism, let
$X^\lambda \to X$ be the corresponding cover, and let $\k$ be an
algebraically closed field with $\ch\k \nmid n$.  By
Shapiro's lemma, $H_q(X^\lambda;\k) \cong H_q(X;\k[\Z_n])$, where $G$ acts
on the group algebra through $\lambda$; by Maschke's theorem,
$\k[\Z_n]$ splits as the direct sum of the rank-one modules $\k_\chi$, one
for each character $\chi \colon \Z_n \to \k^\times$.  Hence, for $q \le k$,
\begin{equation}
\label{eq:cover-all}
\dim_\k H_q(X^\lambda;\k)
= \sum_{\chi \in \Hom(\Z_n,\k^\times)} \depth^q_\k(\chi\circ\lambda) .
\end{equation}
Formulas of this kind go back to work of Libgober, E.~Hironaka and
Sakuma in the early 1990s; see \cite{Hironaka-fourier97} and, for the
version in all degrees, \cite[Thm.~2.5]{Denham-Suciu-plms14}, and the
references therein.  We use \eqref{eq:cover-all} in the
following form.  If $\ch\k = 0$, then
\begin{equation}
\label{eq:cover}
\dim_\k H_q(X^\lambda;\k)
= \dim_\k H_q(X;\k)
+ \sum_{1 \ne j \mid n} \varphi(j)\,
\depth^q_\k\bigl(\rho^{n/j}\bigr),
\end{equation}
where $\rho = \iota \circ \lambda$ for a fixed injection
$\iota \colon \Z_n \inj \k^\times$ and $\varphi$ is the Euler
totient function.  The factor $\varphi(j)$ appears because the $\varphi(j)$
characters $\chi\circ\lambda$ of order exactly $j$, with $\chi$ a character
of $\Z_n$, form a single Galois orbit, while the loci $\VV^q_s$ are defined
over $\Q$; so they share a common depth, of which \eqref{eq:cover} records
$\varphi(j)$ copies.  In positive characteristic this grouping is not
available, since Frobenius orbits can be smaller, and only
\eqref{eq:cover-all} remains.

\begin{lemma}
\label{lem:depth}
Let $\phi \colon \widehat{G_1} \to \widehat{G_2}$ satisfy
$\CH_k(\phi)$.  Then
$\depth^q_\C(\rho^\phi) = \depth^q_\C(\rho)$ for every $q \le k$ and
every torsion character $\rho$ of $G_2$, where $\rho^\phi$ is as in
\eqref{eq:transport}.  This is part \textup{\ref{A-depth}} of Theorem
\textup{\ref{thm:A}}.
\end{lemma}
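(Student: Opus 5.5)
Fix $q \le k$ and a torsion character $\rho$ of $G_2$ of order $n$. The plan is to induct on $n$ using the covering formula \eqref{eq:cover}, inverting it by strong induction on the divisors of $n$. For $n=1$, $\rho = \mathbf 1$ and $\rho^\phi = \mathbf 1$. Taking $\lambda$ to be the trivial epimorphism onto $\Z_1$, $\CH_k(\phi)$ gives $b_q(X_1)=b_q(X_2)$, which is the claim $\depth^q(\mathbf 1)=\depth^q(\mathbf 1)$.

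For general $n$, the image of $\rho$ is the cyclic group $\mu_n \subset \C^\times$. Write $\rho = \iota\circ\lambda$ with $\lambda\colon G_2 \surj \Z_n$ an epimorphism and $\iota\colon \Z_n \inj \C^\times$ the injection onto $\mu_n$. By the definition \eqref{eq:transport} (with $F=\mu_n\cong\Z_n$), $\rho^\phi = \iota\circ\lambda^\phi$, since $\widehat{\iota\circ\lambda} = \iota\circ\widehat\lambda$ by uniqueness of extensions. By Lemma~\ref{lem:transport}, $\lambda^\phi\colon G_1\surj\Z_n$ is again an epimorphism, so $\rho^\phi$ also has order $n$. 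Moreover, transport commutes with powers: $(\rho^{m})^\phi = (\rho^\phi)^{m}$, because postcomposition with the $m$-th power map $\mu_n\to\mu_n$ commutes with the extension $\widehat{(\cdot)}$.

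Now apply \eqref{eq:cover} to the cover $X_2^\lambda$ with the injection $\iota$, and to the cover $X_1^{\lambda^\phi}$ with the same $\iota$. The left-hand sides agree by $\CH_k(\phi)$, and the terms $\dim H_q(X_i;\C)$ agree by the case $n=1$. For each divisor $1\ne j \mid n$ with $j<n$, the character $\rho^{n/j}$ has order $j<n$ and $(\rho^{n/j})^\phi = (\rho^\phi)^{n/j}$, so the induction hypothesis matches the corresponding depths. Subtracting these terms leaves the $j=n$ term, $\varphi(n)\depth^q(\rho)=\varphi(n)\depth^q(\rho^\phi)$. Since $\varphi(n)\ge1$, we may divide.

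The only delicate point is bookkeeping rather than mathematics: \eqref{eq:cover} must be applied with a compatible choice of $\iota$ on both sides, so that the $j$-indexed terms pair up through $\phi$. This compatibility is supplied by the commutation of transport with powers. Note that \eqref{eq:cover} itself uses that depth is constant on Galois orbits, a property already built into that formula. The argument works one degree $q$ at a time and uses $\CH_k$ only in degree $q$.
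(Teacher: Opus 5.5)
Your proof is correct and is essentially the paper's: both invert the triangular system that \eqref{eq:cover} gives on the divisor lattice of the order of $\rho$. Both use that transport commutes with passing to quotients of $\Z_n$ (equivalently, with powers), and that $\CH_k(\phi)$ matches the Betti numbers of the covers $X^{\lambda_j}$. The only difference is that you solve the system recursively, by strong induction on the order. The paper instead writes down the closed-form M\"obius inversion $\varphi(j)\,\depth^q_\C(\rho^{m/j}) = \sum_{i\mid j}\mu(j/i)\,f_q(i)$, which expresses the depth directly as an explicit function of the cover data and so needs no induction.
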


\begin{proof}
Fix $q \le k$ and let $\rho$ be a torsion character of $G_2$ of order
exactly $m$.  Fixing an isomorphism $\iota$ of $\Z_m$ onto the image of
$\rho$, we have $\rho = \iota \circ \lambda$ for a unique
epimorphism $\lambda \colon G_2 \surj \Z_m$.  For each 
divisor $j \mid m$ the character $\rho^{m/j}$ has order
exactly $j$, and arises in the same way from the induced epimorphism
$\lambda_j \colon G_2 \surj \Z_m \surj \Z_j$.
Set
\begin{equation}
\label{eq:fqj}
f_q(j) \coloneqq \dim_\C H_q\bigl(X_2^{\lambda_j};\C\bigr)
- \dim_\C H_q(X_2;\C) .
\end{equation}
Formula \eqref{eq:cover}, applied to $\lambda_j$ with $\k = \C$, reads
$f_q(j) = \sum_{1 \ne i \mid j} \varphi(i)\,\depth^q_\C(\rho^{m/i})$.
This is a M\"obius inversion on the divisor lattice of $m$, and requires no
induction.  Put $D_q(i) = \varphi(i)\,\depth^q_\C(\rho^{m/i})$ for
$i \mid m$ with $i > 1$, and set $D_q(1) = 0$; note also $f_q(1) = 0$,
since $\lambda_1$ is trivial and $X_2^{\lambda_1} = X_2$.  
The displayed formula then reads $f_q(j) = \sum_{i \mid j} D_q(i)$ for 
every $j \mid m$, with the sum over \emph{all} divisors of $j$, so inverting gives
\begin{equation}
\label{eq:mobius}
\varphi(j)\,\depth^q_\C(\rho^{m/j}) = \sum_{i \mid j} \mu(j/i)\, f_q(i) ,
\end{equation}
$\mu$ being the classical M\"obius function.  Taking $j = m$ and dividing
by $\varphi(m) \ne 0$ expresses $\depth^q_\C(\rho)$ as an explicit function
of the quantities $f_q(i)$, for $i \mid m$.

Now apply the same computation to $\rho^\phi$.  Since
$\widehat{\lambda^\phi} = \widehat\lambda \circ \phi$, the operator
\eqref{eq:transport} commutes with passing to quotients of $\Z_m$, so
$(\lambda_j)^\phi = (\lambda^\phi)_j$ and hence
$(\rho^\phi)^{m/j} = (\rho^{m/j})^\phi$ for every $j \mid m$; in
particular, $\rho^\phi$ again has order exactly $m$, by Lemma
\ref{lem:transport}.  The quantities $f_q(i)$ computed for $\rho^\phi$ in
$X_1$ are therefore
$\dim_\C H_q(X_1^{(\lambda_i)^\phi};\C) - \dim_\C H_q(X_1;\C)$, which by
$\CH_k(\phi)$ --- taking $\lambda = \lambda_i$ in Definition
\ref{def:CH}, and $\lambda$ trivial for the second term --- coincide with
those computed for $\rho$ in $X_2$.  Both depths are then the same function
\eqref{eq:mobius} of the same quantities.
\end{proof}

\subsection{Density, and the proof of Theorem \ref{thm:A}}
\label{subsec:density}

What remains is an elementary observation about torsion points, and it is
what Hypothesis $\mathrm{T}_k$ is there to exploit.

\begin{lemma}
\label{lem:density}
Let $T \subseteq (\C^\times)^n$ be an algebraic subtorus and $\rho$ a
torsion character.  Then the torsion points of $\rho T$ are Zariski dense
in $\rho T$.
\end{lemma}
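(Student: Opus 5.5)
We reduce first to the case $\rho = \mathbf 1$. Translation $t \mapsto \rho t$ is an automorphism of the variety $(\C^\times)^n$, so it carries Zariski closures to Zariski closures. Since $\rho$ has finite order, it also carries torsion points to torsion points, and the same holds for its inverse $\rho^{-1}$. Hence the torsion points of $\rho T$ are exactly $\rho$ times the torsion points of $T$, and it suffices to show that the torsion points of $T$ are dense in $T$.

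Next we parametrize $T$. An algebraic subtorus is by definition a connected algebraic subgroup, so $T \cong (\C^\times)^d$ as algebraic groups, where $d = \dim T$. Concretely, $T$ is the image of a monomial embedding $(\C^\times)^d \to (\C^\times)^n$ given by a primitive sublattice. Any isomorphism of algebraic groups is a homeomorphism in the Zariski topology and preserves the property of having finite order. We are therefore reduced to showing that $\mu_\infty^d$ is Zariski dense in $(\C^\times)^d$.

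For that last statement we argue by induction on $d$. Suppose a Laurent polynomial $f \in \C[t_1^{\pm1},\dots,t_d^{\pm1}]$ vanishes on $\mu_\infty^d$. Write $f = \sum_k g_k(t_1,\dots,t_{d-1})\,t_d^k$. For each fixed $\zeta \in \mu_\infty^{d-1}$, the one-variable Laurent polynomial $f(\zeta, t_d)$ vanishes on the infinite set $\mu_\infty$, so it is identically zero. Thus every coefficient $g_k$ vanishes on $\mu_\infty^{d-1}$, and by induction each $g_k = 0$. The base case $d = 1$ is the same fact: $\mu_\infty$ is infinite and a nonzero Laurent polynomial in one variable has only finitely many zeros. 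Hence $f = 0$, and $\mu_\infty^d$ is dense.

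The only step requiring care is the identification $T \cong (\C^\times)^d$. One must use that a subtorus is connected, so that $T$ is not merely a diagonalizable group with a finite component group; otherwise one would have to treat each coset separately. Connectedness makes the character lattice of $T$ torsion-free, hence free of rank $d$, and this gives the isomorphism. A torsion translate of a disconnected subgroup would be handled the same way, one component at a time.
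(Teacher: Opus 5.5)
Your proof is correct and follows the same route as the paper: identify $T$ with $(\C^\times)^d$, use density of roots of unity, and transport along translation by the torsion character $\rho$. Your induction on $d$ also makes explicit a step the paper leaves implicit, namely that density of $\mu_\infty$ in $\C^\times$ gives Zariski density of $\mu_\infty^d$ in $(\C^\times)^d$; this needs an argument because the Zariski topology on a product is not the product topology.
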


\begin{proof}
The torsion points of $T$ are dense in $T$, since $T$ is isomorphic to a
product of copies of $\C^\times$ and the roots of unity are dense in
$\C^\times$.  Multiplication by $\rho$ is an isomorphism of varieties
$T \to \rho T$ carrying torsion points to torsion points, because $\rho$
itself is torsion.
\end{proof}

\begin{proof}[Proof of Theorem \textup{\ref{thm:A}}]
Part \ref{A-depth} is Lemma \ref{lem:depth}.  For part \ref{A-loci}, fix
$q \le k$ and $s \ge 1$, and set
\[
\Theta^q_s(X_i) \coloneqq \{\, \rho \ \text{a torsion character of } G_i :
\depth^q_\C(\rho) \ge s \,\} .
\]
By Hypothesis $\mathrm{T}_k$ the variety $\VV^q_s(X_2,\C)$ is a finite
union of torsion translates $\rho T$ of subtori, so by Lemma
\ref{lem:density} it is the Zariski closure of $\Theta^q_s(X_2)$; and
likewise for $X_1$.  By part \ref{A-depth}, the bijection
$\rho \mapsto \rho^\phi$ of Lemma \ref{lem:transport} carries
$\Theta^q_s(X_2)$ onto $\Theta^q_s(X_1)$.  Since $\phi$ is rigidified by
$\psi$, Remark \ref{rem:rigidify} gives $\psi^{*}\rho = (\rho^\phi)^{u^{-1}}$
on torsion characters.  Now $\Theta^q_s(X_1)$ is the set of torsion points
of $\VV^q_s(X_1,\C)$, and $\tau \mapsto \tau^{v}$, for $v \in
\widehat\Z^{\times}$, is the action of an element of
$\Gal(\overline\Q/\Q)$ on torsion characters; so $\Theta^q_s(X_1)$ is
stable under it (\S\ref{subsec:jumploci}).  Hence $\psi^{*}$ carries
$\Theta^q_s(X_2)$ onto $\Theta^q_s(X_1)$; and $\psi^{*}$, 
being an isomorphism of algebraic
groups, commutes with Zariski closure.  Hence
$\psi^{*}(\VV^q_s(X_2,\C)) = \VV^q_s(X_1,\C)$.
\end{proof}

Hypothesis $\mathrm{T}_k$ is used twice, once for each space, and only to
pass between a variety and its torsion points.  Lemma \ref{lem:depth} is
valid without it: the depth function on torsion characters is a profinite
invariant whenever homology of covers is, for arbitrary finitely generated
groups.  What $\mathrm{T}_k$ buys is that, in the quasi-projective setting,
that function encodes everything.

\section{Homology of covers in degree one}
\label{sect:degree-one}

This section proves Theorem \ref{thm:B}\ref{B-one}.  Throughout, $G$ is
a group and $\alpha = \alpha_G \colon G \to \widehat G$ is the canonical
morphism.

\begin{lemma}
\label{lem:fi}
Let $G$ be a finitely generated group.
\begin{enumerate}[label=\textup{(\roman*)}, itemsep=1.5pt]
\item \label{fi-corr}
The assignment $U \mapsto \alpha^{-1}(U)$ is an index-preserving bijection
from the open subgroups of $\widehat G$ to the finite-index subgroups of
$G$, with inverse $N \mapsto \overline{\alpha(N)}$; it carries normal
subgroups to normal subgroups.
\item \label{fi-ab}
If $N \le G$ has finite index, then $\overline{\alpha(N)} \cong \widehat N$,
and consequently
$\overline{\alpha(N)}_{\ab} \cong N_{\ab} \otimes_\Z \widehat\Z$.
\item \label{fi-det}
An isomorphism $\phi \colon \widehat{G_1} \to \widehat{G_2}$ induces an
index-preserving bijection between the finite-index subgroups of $G_1$ and
those of $G_2$, and isomorphisms of abelian groups
$(N_2)_{\ab} \cong (N_1)_{\ab}$ for corresponding subgroups.
\end{enumerate}
\end{lemma}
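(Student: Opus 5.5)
The plan is to prove the three parts in order, with (i) carrying the main weight.

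For part \ref{fi-corr}, I would use the standard description of $\widehat G$ as the inverse limit of the finite quotients $G/K$, with $K$ running over the finite-index normal subgroups, together with the fact that $\alpha(G)$ is dense in $\widehat G$. Given an open subgroup $U$, the preimage $\alpha^{-1}(U)$ has finite index. Indeed, $U$ contains an open normal subgroup $V$, and $G/\alpha^{-1}(V)$ embeds in the finite group $\widehat G/V$. Density then gives $\alpha(G)U=\widehat G$, so $G/\alpha^{-1}(U)\to \widehat G/U$ is a bijection of coset spaces, and the index is preserved.

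For the converse, let $N$ be of finite index. I would pass to its normal core $K$, which also has finite index, since $G$ is finitely generated; in fact any finite-index subgroup has finite-index core, with or without finite generation. The projection $\widehat G\to G/K$ is continuous, and its kernel $\overline{\alpha(K)}$ is open. Then $\overline{\alpha(N)}$ is the preimage of $N/K$, so it is open, and $\alpha^{-1}(\overline{\alpha(N)})=N$. Conversely, $\overline{\alpha(\alpha^{-1}(U))}=U$ because $U$ is open and $\alpha(G)$ is dense: $\alpha(\alpha^{-1}(U))=\alpha(G)\cap U$ is dense in $U$. Normality is preserved in both directions by density and continuity.

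For part \ref{fi-ab}, I would show that the profinite topology on $N$ is the one induced from $G$. Every finite-index subgroup $M\le N$ has finite index in $G$, so its core in $G$ lies in $M$. Hence the inclusion $N\to G$ induces a map $\widehat N\to\widehat G$ that is injective with image $\overline{\alpha(N)}$. Taking abelianizations, $\widehat N_{\ab}$ (the closed commutator quotient) equals the profinite completion of $N_{\ab}$. This is because the finite quotients of $\widehat N$ that are abelian are precisely the finite quotients of $N_{\ab}$. Since $N$ is finitely generated (finite index in a finitely generated group), $N_{\ab}$ is a finitely generated abelian group, and its profinite completion is $N_{\ab}\otimes\widehat\Z$. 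I also need that the abstract commutator subgroup of $\widehat N$ is closed, or else I would interpret $(\cdot)_{\ab}$ as the continuous abelianization. In the finitely generated case I can invoke Nikolov--Segal, or simply define the abelianization topologically.

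For part \ref{fi-det}, I would compose the bijections of part \ref{fi-corr}: $N_1\mapsto \alpha_2^{-1}(\phi(\overline{\alpha_1(N_1)}))$. This preserves index because $\phi$ preserves the index of open subgroups. The isomorphism $\phi$ then restricts to an isomorphism $\overline{\alpha_1(N_1)}\cong\overline{\alpha_2(N_2)}$, and part \ref{fi-ab} gives $(N_1)_{\ab}\otimes\widehat\Z\cong (N_2)_{\ab}\otimes\widehat\Z$.

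The main obstacle is the last step, descending from $\widehat\Z$-modules to abelian groups. I would write $A_i=(N_i)_{\ab}\cong \Z^{r_i}\oplus T_i$ with $T_i$ finite. Then $A_i\otimes\widehat\Z\cong\widehat\Z^{r_i}\oplus T_i$. The torsion subgroup recovers $T_i$, and tensoring the torsion-free quotient with $\F_p$ recovers $r_i$. Hence $A_1\cong A_2$ abstractly. This uses only the structure theorem, and it is the point at which finite generation is genuinely needed.
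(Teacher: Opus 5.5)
Your proposal is correct and follows essentially the same route as the paper: density of $\alpha(G)$ gives the index count in (i), the observation that a finite-index subgroup of $N$ has finite index in $G$ makes the induced and intrinsic profinite topologies on $N$ agree in (ii), and (iii) recovers a finitely generated abelian group $A$ from $A\otimes\widehat\Z\cong\widehat\Z^{\,r}\oplus T$. Your caveat about reading $(\cdot)_{\ab}$ as the quotient by the closed commutator subgroup matches the paper's own stance, which works with closed series throughout and explicitly avoids Nikolov--Segal (Remark~\ref{rem:no-nikolov-segal}).
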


\begin{proof}
\ref{fi-corr} is standard.  An open subgroup $U \le \widehat G$ is closed,
and $\alpha(G)$ is dense, so $\alpha(G)U = \widehat G$ and hence
$[G : \alpha^{-1}(U)] = [\widehat G : U]$.  Conversely, a finite-index
subgroup $N \le G$ is open in the profinite topology of $G$, and
$\alpha^{-1}(\overline{\alpha(N)}) = N$.

For \ref{fi-ab}, the point is that the topology induced on $N$ from the
profinite topology of $G$ coincides with the profinite topology of $N$.
Indeed, if $M \le N$ has finite index then $[G:M] = [G:N][N:M] < \infty$,
so $M$ is open in $G$; the other inclusion is trivial.  The two uniform
structures therefore agree and the completions coincide.  Finally, $N$ is
finitely generated (being of finite index in a finitely generated group),
so $\widehat N_{\ab} = \widehat{N_{\ab}} = N_{\ab} \otimes \widehat\Z$.

\ref{fi-det} follows, once one notes that a finitely generated abelian
group $A$ is recovered from $A \otimes \widehat\Z$: writing
$A \cong \Z^r \oplus T$ with $T$ finite, one has
$A \otimes \widehat\Z \cong \widehat\Z^{\,r} \oplus T$, and both $r$ and
$T$ are visible on the right.
\end{proof}

\begin{proof}[Proof of Theorem \textup{\ref{thm:B}}\ref{B-one}]
Let $\lambda_i \colon G_i \surj \Z_n$ correspond under
$\phi$, with kernels $N_i$.  Since $\pi_1(X_i^{\lambda_i}) = N_i$, we have
$H_1(X_i^{\lambda_i};\Z) = (N_i)_{\ab}$ --- no asphericity is needed, as
degree-one homology sees only the fundamental group.  By Lemma
\ref{lem:fi}\ref{fi-det}, these abelian groups are isomorphic, so their
ranks agree, which is the assertion.
\end{proof}

\section{Homology of covers in higher degrees: goodness}
\label{sect:good}

Theorem \ref{thm:B}\ref{B-one} is confined to degree one for a single
reason: Lemma \ref{lem:fi} delivers $H_1$ of finite-index subgroups and
nothing higher.  Goodness removes exactly this obstruction.

\begin{proof}[Proof of Theorem \textup{\ref{thm:B}}\ref{B-all}]
Let $\lambda \colon G \surj \Z_n$ be an epimorphism with
kernel $N$, and let $X^\lambda \to X$ be the corresponding cover.  Fix a
prime $p$ and take $M = \F_p[\Z_n]$, a $G$-module via $\lambda$ and finite
as a set.  As a $G$-module, $M = \Ind_N^G \F_p$, so Shapiro's lemma gives
$H^q(G;M) \cong H^q(N;\F_p)$; since $X$ is aspherical the cover
$X^\lambda$ is a $K(N,1)$, whence
$H^q(N;\F_p) \cong H^q(X^\lambda;\F_p)$.  Since $G$ is good,
$H^q(\widehat G; M) \cong H^q(G;M)$.  Apply this to
$\lambda \colon G_2 \surj \Z_n$ and to $\lambda^\phi \colon G_1 \surj \Z_n$
(Lemma \ref{lem:transport}): the two modules $M$ are pulled back from the
same $\widehat{G_2}$-module along $\widehat\lambda$ and
$\widehat{\lambda^\phi} = \widehat\lambda\circ\phi$, so $\phi$ induces
$H^q(\widehat{G_2}; M) \cong H^q(\widehat{G_1}; M)$, whence
$H^q(X_2^{\lambda};\F_p) \cong H^q(X_1^{\lambda^\phi};\F_p)$ for every
$p$ and $q$.

Now $\dim_{\F_p} H^q(X^\lambda;\F_p) = \operatorname{rank}
H_q(X^\lambda;\Z) + t_p^q + t_p^{q-1}$, where $t_p^{j}$ is the rank of the
$p$-torsion of $H_j(X^\lambda;\Z)$; these vanish for all but finitely many
$p$, so the rank is recovered as
$\min_p \dim_{\F_p} H^q(X^\lambda;\F_p)$.  Hence
$\dim_\C H_q(X_1^{\lambda^\phi};\C) = \dim_\C H_q(X_2^{\lambda};\C)$ for
every $q$, which is $\CH_k(\phi)$ for every $k$.
\end{proof}

Both hypotheses hold for fiber-type arrangements: these are aspherical by
Falk--Randell \cite{Falk-Randell-inv85}, and their groups, being iterated semidirect
products of finitely generated free groups, are good, since successive
extensions of finitely generated free groups are.  Hence:

\begin{corollary}
\label{cor:fibertype}
If $\A$ is fiber-type, then $\widehat{G(\A)}$ determines $\VV^q_s(\A,\C)$
for all $q$ and $s$.
\end{corollary}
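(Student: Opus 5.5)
The plan is to combine Theorem~\ref{thm:B}\ref{B-all} with Theorem~\ref{thm:A}, once the two standing hypotheses are checked for a fiber-type arrangement $\A$. Let $\A'$ be any arrangement whose group has $\widehat{G(\A')} \cong \widehat{G(\A)}$ via some $\phi$. For the statement to make sense across arrangements, I will read ``determines'' in the sense of Theorem~\ref{thm:A}: depths of torsion characters correspond under $\rho \mapsto \rho^\phi$, and the loci correspond under a rigidification~$\psi$. I also need the comparison space to be fiber-type. I would first argue that fiber-type arrangements are the natural comparison class. If that fails, I would state the corollary as a comparison between two fiber-type arrangements $\A_1,\A_2$, which is what the sentence preceding the corollary supports.

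First, verify the hypotheses of Theorem~\ref{thm:B}\ref{B-all}. The complement $M(\A)$ has the homotopy type of a finite CW-complex, so it is of finite type. It is aspherical by Falk--Randell. Its group $G(\A)$ is an iterated semidirect product $F_{d_\ell}\rtimes\cdots\rtimes F_{d_1}$ of finitely generated free groups. Goodness then follows from Serre's criterion: an extension $1\to N\to G\to Q\to 1$ with $N$ finitely generated and good, $Q$ good, and $H^q(N;M)$ finite for every finite $G$-module $M$, is good. Free groups of finite rank satisfy this finiteness because they have finite-type $K(\pi,1)$'s. We induct along the fibration tower, applying the criterion at each stage with $N$ free. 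This gives $\CH_k(\phi)$ for every $k$.

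Second, Hypothesis $\mathrm{T}_k$ holds for all $k$, because arrangement complements are smooth quasi-projective varieties (Budur--Wang). Theorem~\ref{thm:A}\ref{A-depth} then gives equality of all depths of torsion characters in every degree.

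For the loci themselves, Theorem~\ref{thm:A}\ref{A-loci} needs a rigidification $\psi$, and I expect this to be the main obstacle. I would handle it by phrasing the conclusion as ``the set $\Theta^q_s$ of torsion points, transported by $\phi$, determines $\VV^q_s$ as its Zariski closure''. This holds without any rigidification: Lemma~\ref{lem:density} together with $\mathrm{T}_k$ shows each $\VV^q_s(\A,\C)$ is the closure of its torsion points. Whenever a rigidification is present, as in the arrangement applications, part~\ref{A-loci} upgrades this to an equality $\psi^*\VV^q_s(\A_2)=\VV^q_s(\A_1)$.
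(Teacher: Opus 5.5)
Your proposal is correct and follows essentially the paper's route: asphericity (Falk--Randell) and goodness of iterated semidirect products of finitely generated free groups give $\CH_k(\phi)$ for every $k$ via Theorem~\ref{thm:B}\ref{B-all}, and Hypothesis $\mathrm{T}_k$ (Budur--Wang) lets Theorem~\ref{thm:A} turn this into the statement about $\VV^q_s$. Your fallback reading is the one the paper intends: both arrangements are fiber-type, since Theorem~\ref{thm:B}\ref{B-all} needs its hypotheses on both sides. Your rigidification-free formulation, recovering $\VV^q_s$ as the Zariski closure of the transported torsion points, is a correct consequence of Theorem~\ref{thm:A}\ref{A-depth} and Lemma~\ref{lem:density}.
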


Fiber-type is far from the largest class where goodness is known.
Marin \cite[Prop.~1]{Marin-aspm12} proves goodness of the pure braid group
$P$ and braid group $B$ of $W$, for \emph{every} irreducible complex
reflection group $W$ in the Shephard--Todd classification except
possibly twelve exceptional cases ($G_{23}$, $G_{24}$, $G_{27}$--$G_{31}$,
$G_{33}$--$G_{37}$).  The mechanism is worth recording because it goes
beyond fiber-type: for $W = G(de,e,n)$ with $d \ne 1$, the complement is
fiber-type as above; for $d = 1$ (in particular, for $W$ of type $D_n$,
whose reflection arrangement is not fiber-type for $n \ge 4$), Marin instead
exhibits a fibration over a fiber-type base with fiber a curve group,
and uses that $\pi_1$ of a curve is good and that goodness passes through
extensions $1 \to H \to G \to Q \to 1$ with $H,Q$ good, provided $H$ is
finitely generated with $H^q(H;M)$ finite for all $q$ and all finite $M$,
as holds when $H$ is of type $FP_\infty$
\cite{Grunewald-JaikinZapirain-Zalesskii-duke08} --- automatic here, 
since $H = \pi_1(\text{affine curve})$ is free of finite rank.
All rank-2 reflection arrangements
are fiber-type outright, disposing of $G_4$--$G_{22}$; three more of the
exceptional cases share a braid group with an infinite-family member
($G_{25}$ with $G(1,1,4)$, $G_{32}$ with $G(1,1,5)$, $G_{26}$ with
$G(2,1,3)$), leaving the twelve above genuinely open (Marin's Conjecture
1).  Since reflection arrangement complements are aspherical
\cite{Bessis-ann15}, $\widehat{G(\A)}$ therefore determines every
$\VV^q_s(\A,\C)$ for the reflection arrangement of any of these groups,
unconditionally.

For general arrangements, goodness is precisely the following question,
from \cite[Ch.~5]{CDFSSTY-pre09}.

\begin{question}[\cite{CDFSSTY-pre09}]
\label{quest:good}
Are arrangement groups good?
\end{question}

The two hypotheses of Theorem \ref{thm:B}\ref{B-all} are genuinely
independent, and simplicial arrangements separate them.  By a theorem of
Deligne \cite{Deligne-inv72}, the complement of a simplicial arrangement is
aspherical, so asphericity holds throughout that class, which is large
and properly contains the reflection arrangements of finite Coxeter
groups.  On that Coxeter sub-class goodness
is known, by Marin as above, in types $A_n$, $B_n$, $D_n$ and $I_2(m)$, where
Theorem \ref{thm:B}\ref{B-all} therefore applies unconditionally; the
exceptional types $H_3, F_4, H_4, E_6, E_7, E_8$ are
$G_{23}, G_{28}, G_{30}, G_{35}, G_{36}, G_{37}$, and so lie among Marin's
twelve open cases.  The open test cases for Question \ref{quest:good} within
the simplicial class are therefore these six Coxeter arrangements and the
\emph{non-Coxeter} simplicial arrangements.

The natural companion of Question \ref{quest:good} is residual finiteness,
although, as Lemma \ref{lem:rf-kernel} and Remark \ref{rem:rf-scope}
explain, it plays no role in the results above.  For complements of plane
curves the question goes back to Zariski, who asked whether fundamental
groups that are not residually finite can occur
\cite[Ch.~VIII, \S1 and App.~1]{Zariski-71}, as recalled in
\cite{Toledo-ihes93}.  For smooth projective varieties they do: Toledo
\cite{Toledo-ihes93} constructed the first examples, answering a question of
Serre.  For plane curves the question is open in general, and
has been answered affirmatively in degree at most five
\cite{Ye-Zhu-arx25}.  For arrangements, residual finiteness holds whenever
the group is residually torsion-free nilpotent, as for fiber-type
arrangements \cite{Falk-Randell-inv85}; Marin \cite[Prop.~2]{Marin-aspm12}
establishes it for many reflection arrangements, by that mechanism when
$W = G(de,e,n)$ with $d \ne 1$ or $W$ has rank $2$, and by \emph{linearity},
a mechanism with no counterpart above, when $W$ is a Coxeter or Shephard
group.  In general the question is open; see
\cite[Problem~1.9]{Koberda-Suciu-ccm20} for affine line arrangements.

\begin{question}[\cite{CDFSSTY-pre09}]
\label{quest:res-finite}
Let $G$ be an arrangement group.  Is $G$ residually finite?
\end{question}

\section{Verbal quotients and their graded pieces}
\label{sect:verbal}

This section proves Theorem \ref{thm:C}.  It is independent of Sections
\ref{sect:engine}--\ref{sect:good} but for the elementary Lemma
\ref{lem:fi}\ref{fi-det}: no covers, no jump loci, and no hypothesis on
the spaces.  The mechanism is that the relevant subgroups of
$\widehat G$ admit a description that is intrinsic to it as a topological
group, namely as closures; the only external input is Mal'cev's theorem on
the profinite topology of a finitely generated nilpotent group.

\subsection{Closed verbal series}
\label{subsec:closed-series}

Let $W$ be a set of words, and $W(G)$ the corresponding verbal subgroup
of a group $G$, as in \S\ref{subsec:results}.  Verbal subgroups are
normal, indeed fully invariant, and are carried onto verbal subgroups by
epimorphisms.  The lower central series
is itself of this kind: $\gamma_r(G) = V_r(G)$ for
$V_r = \{[x_1,\dots,x_r]\}$ the left-normed commutator of weight $r$.  
For a topological group $P$ we write
\begin{equation}
\label{eq:closed-lcs}
\Delta^W_r(P) \coloneqq \overline{W(P)\,\gamma_r(P)}
= \overline{(W \cup V_r)(P)} ,
\end{equation}
which, unlike $W(P)\gamma_r(P)$, is closed, so that $P/\Delta^W_r(P)$ is
again profinite; being defined by words and closure, it is carried onto
its counterpart by every isomorphism of profinite groups.  For 
$W = \varnothing$ this is the closed lower central
series $\Gamma_r(P) = \overline{\gamma_r(P)}$.

\begin{lemma}
\label{lem:topgp}
Let $P$ be a Hausdorff topological group and let $H \le P$ be a dense
subgroup.
Then $\overline{V(H)} = \overline{V(P)}$ for every set of words $V$.
\end{lemma}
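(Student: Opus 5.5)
The inclusion $\overline{V(H)} \subseteq \overline{V(P)}$ is immediate, since $V(H) \subseteq V(P)$. So the plan is to prove the reverse inclusion. Because $\overline{V(H)}$ is a closed subgroup, it suffices to show that every generator of $V(P)$ lies in $\overline{V(H)}$, that is, every value $w(g_1,\dots,g_n)$ with $w \in V$ and $g_i \in P$. Indeed, the subgroup generated by a subset of a closed subgroup stays inside that closed subgroup, so $V(P) \subseteq \overline{V(H)}$, and taking closures gives the claim.

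The key step is continuity of word maps. For a fixed word $w$ in $x_1,\dots,x_n$, the map $w \colon P^n \to P$, $(g_1,\dots,g_n) \mapsto w(g_1,\dots,g_n)$, is continuous, since it is built from multiplication and inversion. As $H$ is dense in $P$, the product $H^n$ is dense in $P^n$ in the product topology. A continuous map sends the closure of a set into the closure of its image, so
\[
w(P^n) = w\bigl(\overline{H^n}\bigr) \subseteq \overline{w(H^n)} \subseteq \overline{V(H)} .
\]
This is exactly the required containment of the generators of $V(P)$.

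There is no real obstacle. The only points to state carefully are that the closure of a subgroup of a topological group is again a subgroup, and that density passes to finite products. The Hausdorff hypothesis is not actually needed for this argument; it only ensures that closures behave as expected in the later applications, for example that $\overline{\{1\}}$ is trivial.
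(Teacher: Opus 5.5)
Your proposal is correct and follows essentially the same route as the paper: the trivial inclusion, then continuity of the word map $w \colon P^n \to P$ together with density of $H^n$ in $P^n$ gives $w(P^n) \subseteq \overline{w(H^n)} \subseteq \overline{V(H)}$, and you finish by noting that this closed subgroup contains $V(P)$. Your side remark that the Hausdorff hypothesis is not used in this argument is also correct.
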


\begin{proof}
One inclusion is immediate from $V(H) \subseteq V(P)$.  For
the other, let $w \in V$ be a word in $n$ letters.  The map
$P^n \to P$, $(p_1,\dots,p_n) \mapsto w(p_1,\dots,p_n)$, is continuous, 
and $P^n = \overline{H^n}$ in the product topology, so
$w(P^n) \subseteq \overline{w(H^n)} \subseteq \overline{V(H)}$.  The
right-hand side is a closed subgroup, being the closure of a subgroup, and
it therefore contains the subgroup generated by all such values, which is
$V(P)$; hence it contains $\overline{V(P)}$.
\end{proof}

\subsection{Proof of Theorem \ref{thm:C}}
\label{subsec:verbal-proof}

Everything follows from the next computation, which identifies the graded
pieces of the series \eqref{eq:closed-lcs} outright.

\begin{theorem}
\label{thm:verbal}
Let $G$ be a finitely generated group, let $W$ be a set of words, and write
$P = \widehat G$, with canonical morphism $\alpha \colon G \to P$ and
$\Delta_r = \Delta^W_r(P)$ as in \eqref{eq:closed-lcs}.  Then
\[
\Delta_r/\Delta_{r+1} \cong \gr_r\bigl(G/W(G)\bigr) \otimes \widehat\Z
\qquad \text{for every } r \ge 1 .
\]
\end{theorem}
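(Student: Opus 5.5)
The plan is to reduce to a finitely generated nilpotent quotient of $G$ and use Mal'cev's theorem there. Fix $r$ and set $Q = G/W(G)\gamma_{r+1}(G)$. This is a finitely generated nilpotent group, and
\[
\gr_r\bigl(G/W(G)\bigr) \cong W(G)\gamma_r(G)/W(G)\gamma_{r+1}(G) = \gamma_r(Q),
\]
since $\gamma_r$ of the quotient $G/W(G)$ is the image of $\gamma_r(G)$. Here $\gamma_r(Q)$ is central in $Q$ and finitely generated.

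\textbf{Step 1: identify $P/\Delta_{r+1}$ with $\widehat Q$.} The quotient map $G \surj Q$ extends to a continuous epimorphism $P \surj \widehat Q$. Its kernel contains $\Delta_{r+1}$, because it kills $\alpha(W(G)\gamma_{r+1}(G))$, and by Lemma~\ref{lem:topgp} (with $H = \alpha(G)$ dense in $P$) the closure of that image equals $\Delta_{r+1}$.

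Conversely, $P/\Delta_{r+1}$ is a profinite group whose finite continuous quotients $F$ satisfy the laws $W \cup V_{r+1}$. Every map $G \to F$ therefore factors through $Q$. So $P/\Delta_{r+1}$ satisfies the universal property of $\widehat Q$, and $P/\Delta_{r+1} \cong \widehat Q$. The same argument with $r$ in place of $r+1$ shows that $\Delta_r/\Delta_{r+1}$ is the image of $\Delta_r$ in $\widehat Q$. Applying Lemma~\ref{lem:topgp} once more, this image is the closure of $\alpha_Q(\gamma_r(Q))$ in $\widehat Q$.

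\textbf{Step 2: compute that closure.} By Mal'cev's theorem, every subgroup of the finitely generated nilpotent group $Q$ is closed in its profinite topology. Moreover, the profinite topology of $Q$ induces on the subgroup $C = \gamma_r(Q)$ its own full profinite topology. Concretely, for each finite-index $M \le C$ there is a finite-index $K \le Q$ with $K \cap C \subseteq M$.

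It follows that $\overline{\alpha_Q(C)} \cong \widehat C$. Since $C$ is a finitely generated abelian group, $\widehat C = C \otimes \widehat\Z$. Combining with Step 1 gives $\Delta_r/\Delta_{r+1} \cong \gr_r(G/W(G)) \otimes \widehat\Z$.

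\textbf{Main obstacle.} The delicate point is the subgroup-separability statement in Step 2: the induced topology on $C$ must be its full profinite topology, not something coarser. The paper stresses that $C$ has infinite index, so this does not follow from Lemma~\ref{lem:fi}. I would first try to use centrality directly. Mal'cev says $M$ is separable in $Q$, and since $C/M$ is finite, finitely many finite quotients of $Q$ separate $C$ from the elements of $C \setminus M$ modulo $M$. The intersection $K$ of the corresponding kernels then has finite index and satisfies $K \cap C \subseteq M$.

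Finally, I would verify that the abstract isomorphism $\overline{\alpha_Q(C)} \cong \widehat C$ holds as topological groups. This follows because both are profinite completions of $C$ with respect to the same family of finite-index subgroups.
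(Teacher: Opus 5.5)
Your proposal is correct and follows essentially the paper's proof. You identify $P/\Delta_{r+1}$ with $\widehat Q$ for $Q = G/W(G)\gamma_{r+1}(G)$, using Lemma~\ref{lem:topgp} and the universal property, and then invoke Mal'cev's theorem so that the profinite topology of $Q$ induces the full profinite topology on $\gamma_r(Q) \cong \gr_r(G/W(G))$, whose closure in $\widehat Q$ is therefore $\gamma_r(Q) \otimes \widehat\Z$. The differences are only cosmetic: the paper reads the graded piece off as $\ker(\widehat Q \to \widehat{Q/A})$ and cites Ribes--Zalesskii for the exactness, whereas you take the image of $\Delta_r$ in $\widehat Q$ directly and prove the induced-topology claim by hand from subgroup separability, an argument that in fact never uses centrality.
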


Theorem \ref{thm:C} follows at once: the subgroups $\Delta^W_r$ are
intrinsic, so an isomorphism $\phi \colon \widehat{G_1} \to \widehat{G_2}$ of
profinite groups carries $\Delta^W_r(\widehat{G_1})$ onto
$\Delta^W_r(\widehat{G_2})$ for every $r$, whence
$\gr_r(G_1/W(G_1)) \otimes \widehat\Z \cong
\gr_r(G_2/W(G_2)) \otimes \widehat\Z$. Moreover, each
$\gr_r(G_i/W(G_i))$ is a finitely generated abelian group, since $G_i$ is
finitely generated, so is recovered from its tensor product with
$\widehat\Z$ as in Lemma \ref{lem:fi}\ref{fi-det}.

\begin{proof}[Proof of Theorem \textup{\ref{thm:verbal}}]
Write $\overline G = G/W(G)$ and, for $r \ge 1$,
\[
N_r \coloneqq W(G)\,\gamma_r(G) = (W \cup V_r)(G) ,
\]
so that $N_1 = G$, $\gamma_r(\overline G) = N_r/W(G)$, and
$\gr_r(\overline G) = N_r/N_{r+1}$.  Note that $[N_r,G] \subseteq N_{r+1}$,
since $W(G)$ is normal and $[\gamma_r(G),G] = \gamma_{r+1}(G)$.

\vspace*{2pt}
\emph{Step 1: $\Delta_r = \overline{\alpha(N_r)}$.}  Apply Lemma
\ref{lem:topgp} with $V = W \cup V_r$ and the dense subgroup
$H = \alpha(G)$, using $V(\alpha(G)) = \alpha(V(G)) = \alpha(N_r)$.

\vspace*{2pt}
\emph{Step 2: $P/\Delta_{r+1} \cong \widehat{Q_r}$, where
$Q_r = G/N_{r+1}$.}  The group $Q_r$ is finitely generated, nilpotent of
class $\le r$, and satisfies $W(Q_r) = 1$.  By Step 1, the composite
$G \to P \to P/\Delta_{r+1}$ kills $N_{r+1}$, so factors through $Q_r$ with
dense image, the target being profinite.  For the universal property, let
$f \colon Q_r \to F$ with $F$ finite, and let $\widehat f \colon P \to F$ be
the continuous extension of the composite $G \to Q_r \to F$.  Then
$\widehat f(P) = \overline{f(Q_r)} = f(Q_r)$, as $F$ is finite; this is a
quotient of $Q_r$, hence nilpotent of class $\le r$ and killed by $W$.
Therefore
$\widehat f\bigl(W(P)\gamma_{r+1}(P)\bigr) =
W(\widehat f(P))\,\gamma_{r+1}(\widehat f(P)) = 1$, and since
$\ker \widehat f$ is closed, $\widehat f$ kills $\Delta_{r+1}$ and so
factors through $P/\Delta_{r+1}$.  Uniqueness of the factorisation follows
from density.

\vspace*{2pt}
\emph{Step 3: the graded pieces.}  Fix $r \ge 1$ and put $Q = Q_r$ and
$A = N_r/N_{r+1} = \gr_r(\overline G)$, a subgroup of $Q$ which is central
by $[N_r,G] \subseteq N_{r+1}$, with $Q/A = Q_{r-1}$.  Step 2, applied at
$r$ and at $r-1$, gives $P/\Delta_{r+1} \cong \widehat Q$ and
$P/\Delta_{r} \cong \widehat{Q/A}$, compatibly with the projections by the
uniqueness in the universal property; hence
\[
\Delta_r/\Delta_{r+1} =
\ker\bigl(\widehat Q \longrightarrow \widehat{Q/A}\bigr) .
\]
Now $Q$ is finitely generated nilpotent, hence polycyclic, hence a solvable
minimax group; and $A$ is closed in the profinite topology of $Q$, since
$Q/A$ is finitely generated nilpotent and therefore residually finite.  By
Mal'cev's theorem in the form of \cite[Lem.~4.7.6]{Ribes-Zalesskii-10},
the profinite topology of $Q$ induces on $A$ its own full profinite
topology; equivalently
\cite[Lem.~3.2.6 and Prop.~3.2.5]{Ribes-Zalesskii-10}, the sequence
$1 \to \widehat A \to \widehat Q \to \widehat{Q/A} \to 1$ is exact.  That
kernel is therefore $\widehat A = A \otimes \widehat\Z$, the group $A$ being
finitely generated abelian.
\end{proof}

\begin{corollary}
\label{cor:lcs-profinite}
Let $G_1,G_2$ be finitely generated groups with
$\widehat{G_1} \cong \widehat{G_2}$.  Then $\gr_r(G_1) \cong \gr_r(G_2)$ for
every $r \ge 1$.  That is, $\widehat G$ determines the lower central series
quotients $\gr_r(G) = \gamma_r(G)/\gamma_{r+1}(G)$, torsion included.
\end{corollary}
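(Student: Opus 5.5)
This is the special case $W = \varnothing$ of Theorem \ref{thm:C}, so the plan is simply to specialise Theorem \ref{thm:verbal} and then decode the resulting profinite statement back to $\Z$.

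With $W = \varnothing$ we have $W(G) = \{1\}$, hence $G/W(G) = G$, and the series $\Delta^W_r(P)$ of \eqref{eq:closed-lcs} becomes the closed lower central series $\Gamma_r(P) = \overline{\gamma_r(P)}$.

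\emph{Step 1.} For $P_i = \widehat{G_i}$, Theorem \ref{thm:verbal} gives
\[
\Gamma_r(P_i)/\Gamma_{r+1}(P_i) \cong \gr_r(G_i) \otimes \widehat\Z .
\]

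\emph{Step 2.} Fix an isomorphism $\phi\colon P_1 \to P_2$. Each $\Gamma_r$ is defined purely in terms of the topological group structure, as the closure of a verbal subgroup, so $\phi$ carries $\Gamma_r(P_1)$ onto $\Gamma_r(P_2)$. It therefore induces isomorphisms of the successive quotients, and Step 1 yields $\gr_r(G_1)\otimes\widehat\Z \cong \gr_r(G_2)\otimes\widehat\Z$.

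\emph{Step 3.} Each $\gr_r(G_i)$ is a finitely generated abelian group. This follows because $G_i$ is finitely generated: $\gamma_r/\gamma_{r+1}$ is generated by the images of the left-normed commutators of weight $r$ in a finite generating set. Lemma \ref{lem:fi}\ref{fi-det} then applies. Writing $A \cong \Z^{b}\oplus T$ with $T$ finite, we have $A\otimes\widehat\Z \cong \widehat\Z^{b}\oplus T$. Here $T$ is the torsion subgroup, and $b$ is read off, for instance, as the rank of the free $\widehat\Z$-module $(A\otimes\widehat\Z)/T$, or from the $\Z_p$-rank for any single $p$. Hence $\gr_r(G_1)\cong\gr_r(G_2)$, torsion included.

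The only point needing any care is Step 3: one must make sure the torsion of $\gr_r$ is not lost in the completion. It is not, because $\gr_r(G)$ is finitely generated, so its torsion is finite and passes unchanged into $A\otimes\widehat\Z$. All the real work, in particular Mal'cev's theorem, is already contained in Theorem \ref{thm:verbal}.
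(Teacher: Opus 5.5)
Your proposal is correct and follows the paper's own route: the paper proves this corollary as Theorem~\ref{thm:C} with $W=\varnothing$. Your three steps are exactly that derivation spelled out: Theorem~\ref{thm:verbal} for $\Gamma_r$, intrinsic transport of the closed series under $\phi$, and recovery of the finitely generated abelian group $\gr_r(G_i)$ from its tensor product with $\widehat\Z$ as in Lemma~\ref{lem:fi}\ref{fi-det}.
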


\begin{proof}
Theorem \ref{thm:C} with $W = \varnothing$.
\end{proof}

For the second case, recall that the \emph{Alexander invariant} of $G$ is the
abelian group $B(G) = G'/G''$, viewed as a module over
$\Lambda = \Z[G_{\ab}]$ by conjugation, and that the \emph{Chen groups} of
$G$ are the lower central series quotients of its maximal metabelian
quotient $G/G''$.  By a classical result of Massey, the two are the 
same data at the graded level: for $j \ge 0$ one has
$\gr_{j+2}(G/G'') \cong \m^{j} B(G)/\m^{j+1} B(G)$, where
$\m \subseteq \Lambda$ is the augmentation ideal, 
see e.g.~\cite{Suciu-pisa24}. In particular, the truncations $B(G)/\m^{k}B(G)$ 
used in \cite{ARCM-aspm07,ACGM-racsam17} have the Chen groups 
as their graded pieces \cite[\S3]{ACGM-racsam17}.

\begin{corollary}
\label{cor:chen-profinite}
Let $G_1,G_2$ be finitely generated groups with
$\widehat{G_1} \cong \widehat{G_2}$.  Then
$\gr_r(G_1/G_1'') \cong \gr_r(G_2/G_2'')$ for every $r \ge 1$.  That is,
$\widehat G$ determines the Chen groups of $G$, torsion included, and with
them the graded pieces of every truncation of the Alexander invariant.
\end{corollary}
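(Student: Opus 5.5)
This is Theorem \ref{thm:C} in the special case $W = \{[[x_1,x_2],[x_3,x_4]]\}$, and the plan is to reduce to it directly. The first step is to check that the verbal subgroup this $W$ defines in any group $G$ is exactly the second derived subgroup $G''$. The subgroup $G''$ is generated by the commutators $[a,b]$ with $a,b \in G'$. By definition, $W(G)$ is generated by the commutators $[[g_1,g_2],[g_3,g_4]]$, in which both entries are simple commutators. So the containment $W(G) \subseteq G''$ is immediate, and the reverse containment is the point to check.

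For $G'' \subseteq W(G)$, we use that $W(G)$ is normal (it is fully invariant). Pass to $\overline{G} = G/W(G)$. Its derived subgroup $\overline{G}'$ is generated by simple commutators, and these pairwise commute in $\overline{G}$ because all $[[g_1,g_2],[g_3,g_4]]$ are trivial there. A group generated by pairwise commuting elements is abelian, so $\overline{G}'$ is abelian and $\overline{G}'' = 1$. Since the image of $G''$ in $\overline{G}$ is $\overline{G}''$, it follows that $G'' \subseteq W(G)$. This matches the identification $W(G) = G''$ already asserted in \S\ref{subsec:results}.

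With $W(G_i) = G_i''$ in hand, Theorem \ref{thm:C} applies directly, since the $G_i$ are finitely generated with $\widehat{G_1} \cong \widehat{G_2}$. It gives $\gr_r(G_1/G_1'') \cong \gr_r(G_2/G_2'')$ for every $r \ge 1$ as abelian groups, so torsion is included.

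For the last clause, we invoke Massey's identification recalled just before the statement: $\gr_{j+2}(G/G'') \cong \m^j B(G)/\m^{j+1}B(G)$. The graded pieces of a truncation $B(G)/\m^k B(G)$ are exactly the $\m^j B/\m^{j+1}B$ for $j<k$. These are Chen groups, so they are determined by $\widehat G$ as well.

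There is no real obstacle. The only point needing care is the verbal identification $W(G) = G''$, which the argument in the second paragraph settles.
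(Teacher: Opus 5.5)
Your proposal is correct and is the paper's argument: the paper proves this corollary in one line, as Theorem~\ref{thm:C} with $W=\{[[x_1,x_2],[x_3,x_4]]\}$, taking $W(G)=G''$ as given. Your check that $W(G)=G''$ (passing to $G/W(G)$, whose derived subgroup is generated by pairwise commuting commutators) and your use of Massey's identification for the truncations both fill in details the paper leaves implicit.
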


\begin{proof}
Theorem \ref{thm:C} with $W = \{[[x_1,x_2],[x_3,x_4]]\}$, for which
$W(G) = G''$.
\end{proof}

\subsection{What the argument does and does not use}
\label{subsec:verbal-scope}

Three comments on the proof: what it avoids, where its one nonelementary
input is used, and what it does not deliver.

\begin{remark}
\label{rem:no-nikolov-segal}
Working with the closed series $\Delta^W_r$ rather than with
$W(P)\gamma_r(P)$ is what keeps the above self-contained.  With the theorem
of Nikolov and Segal \cite{Nikolov-Segal-abb07}, one knows more, namely that
$\gamma_r(\widehat G)$ is already closed for $\widehat G$ topologically
finitely generated, so that $\Gamma_r(\widehat G) = \gamma_r(\widehat G)$;
this is how the corresponding step is run in \cite[\S3.2]{Marin-aspm12}, for
$r=2$.  But the proof above does not need it: only Step 3 appeals to
anything beyond elementary topological group theory.
\end{remark}

\begin{remark}
\label{rem:derived}
Step 3 is also where the finite generation of $G$ is used, and it is worth
saying why the argument is routed through the nilpotent quotients $Q_r$
rather than through $W(G)$ itself.  What is needed is that the profinite
topology induced on a subgroup $N \le G$ be the full profinite topology of
$N$.  For $N$ of finite index this is automatic, a finite-index subgroup of
$N$ being one of $G$; that is the content of Lemma
\ref{lem:fi}\ref{fi-ab}.  For $N = G'$ the argument collapses, and the
induced topology may be strictly coarser: already for $G$ free of rank two,
the topology induced on $G'$ has countably many open subgroups, while the
full profinite topology of $G'$ has uncountably many
\cite[Ex.~3.1.3(1)]{Ribes-Zalesskii-10}.  One then obtains only a
surjection $\widehat{G'} \surj \widehat{G}{}'$.  Nor is this a question of
closedness, or of residual finiteness of $G$: $G'$ is always closed, $G_{\ab}$
being residually finite, and the induced-topology condition is strictly
stronger than closedness.  Theorem \ref{thm:verbal} sidesteps the difficulty
rather than resolving it, every subgroup it completes being either of
finite index or central in a finitely generated nilpotent group.
\end{remark}

Residual finiteness of $G$ plays no role in Theorem \ref{thm:verbal}, nor
anywhere else in the paper, and the reason is worth isolating.

\begin{lemma}
\label{lem:rf-kernel}
Let $G$ be a finitely generated group, and let $K$ be the kernel of
$\alpha \colon G \to \widehat G$, that is, the intersection of all
finite-index subgroups of $G$.  Then $K \le G''$, and
$K \le W(G)\gamma_r(G)$ for every set of words $W$ and every $r \ge 1$.
Consequently the group $G^{\rf} \coloneqq G/K$ is residually finite,
$\widehat{G^{\rf}} = \widehat G$, and the projection
$G \surj G^{\rf}$ induces isomorphisms
\[
G/G'' \isom G^{\rf}/(G^{\rf})''
\quad\text{and}\quad
G/W(G)\gamma_r(G) \isom
G^{\rf}/W(G^{\rf})\gamma_r(G^{\rf}) .
\]
\end{lemma}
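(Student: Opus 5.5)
The plan is to show that every finite-index subgroup $U \le G$ contains $K$, and then to identify which of the subgroups in question are intersections of finite-index subgroups. The key observation is that a subgroup $N \trianglelefteq G$ with $G/N$ residually finite contains $K$. Indeed, for each $g \notin N$ there is a finite-index normal subgroup of $G/N$ missing the image of $g$. Its preimage in $G$ has finite index and misses $g$. Hence $K$, the intersection of all finite-index subgroups, is contained in $N$.

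Next I will check that the relevant quotients are residually finite.
\begin{enumerate}[label=\textup{(\alph*)}]
\item For $N = W(G)\gamma_r(G)$, the quotient $G/N$ is the group $Q_{r-1}$ from the proof of Theorem \ref{thm:verbal}. It is finitely generated nilpotent, hence polycyclic, hence residually finite (Hirsch). This gives $K \le W(G)\gamma_r(G)$.
\item For $N = G''$, the quotient $G/G''$ is a finitely generated metabelian group. By P.~Hall's theorem, finitely generated metabelian groups are residually finite, so $K \le G''$.
\end{enumerate}
I expect (b) to be the main obstacle, since Hall's theorem is the one nontrivial input. An alternative route is to write $G''$ as an intersection of subgroups $W(G)\gamma_r(G)$, but that fails because $G/G''$ need not be residually nilpotent, so I will cite Hall directly.

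Then I will verify the formal consequences. First, $G^{\rf} = G/K$ is residually finite. Finite-index subgroups of $G$ all contain $K$, so they correspond bijectively, with the same quotients, to finite-index subgroups of $G/K$. An element $gK \ne 1$ has $g \notin K$, so some finite-index subgroup of $G$ misses $g$, and its image in $G^{\rf}$ misses $gK$. The same correspondence shows that the inverse systems of finite quotients of $G$ and of $G^{\rf}$ coincide, so $\widehat{G^{\rf}} = \widehat G$.

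For the displayed isomorphisms, verbal subgroups map onto verbal subgroups under epimorphisms. So with $\pi\colon G \surj G^{\rf}$ we have
\[
\pi(G'') = (G^{\rf})'' \quad\text{and}\quad \pi\bigl(W(G)\gamma_r(G)\bigr) = W(G^{\rf})\gamma_r(G^{\rf}).
\]
Since $K = \ker\pi$ is contained in each of these subgroups, the preimage of the image is the subgroup itself. Hence $\pi$ induces the claimed isomorphisms $G/G'' \isom G^{\rf}/(G^{\rf})''$ and $G/W(G)\gamma_r(G) \isom G^{\rf}/W(G^{\rf})\gamma_r(G^{\rf})$.
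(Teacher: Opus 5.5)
Your proposal is correct and follows essentially the same route as the paper. In both, $K$ lies in every normal subgroup with residually finite quotient; this applies to $W(G)\gamma_r(G)$ (finitely generated nilpotent quotient) and to $G''$ (by P.~Hall's theorem), and the remaining assertions follow from the correspondence of finite quotients and the fact that epimorphisms carry verbal subgroups onto verbal subgroups.
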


\begin{proof}
The subgroup $K$ lies in every normal subgroup $N$ of $G$ for which $G/N$ is
residually finite.  This applies to $N = W(G)\gamma_r(G)$, the quotient
being finitely generated and nilpotent, and to $N = G''$, since finitely
generated metabelian groups are residually finite, by a theorem of P.~Hall
\cite{Hall-plms59}.  Every finite quotient of $G$ factors through
$G^{\rf}$, so $\widehat{G^{\rf}} = \widehat G$ and
$G^{\rf}$ is residually finite.  Finally, epimorphisms carry verbal
subgroups onto verbal subgroups, so $(G^{\rf})'' = G''/K$ and
$W(G^{\rf})\gamma_r(G^{\rf}) = W(G)\gamma_r(G)/K$, since $K$
lies in both.
\end{proof}

\begin{remark}
\label{rem:rf-scope}
Lemma \ref{lem:rf-kernel} is the structural reason why none of the three
theorems requires residual finiteness.  The loci $\VV^1_s$ depend only on
$G/G''$ (\S\ref{subsec:jumploci}), and the groups $\gr_r(G/W(G))$ only on
the quotients $G/W(G)\gamma_{r+1}(G)$; so Theorems \ref{thm:A} and
\ref{thm:B} in degree one, and Theorem \ref{thm:C} in full, are statements
about $G^{\rf}$.  In higher degrees the relevant hypothesis is
goodness together with asphericity, not residual finiteness.  Nor is the
generality vacuous: the smooth projective varieties of Toledo
\cite{Toledo-ihes93}, whose fundamental groups are not residually finite,
satisfy Hypothesis $\mathrm{T}_k$ for every $k$ \cite{Budur-Wang-ens15}, so
Theorems \ref{thm:A} and \ref{thm:B}\ref{B-one} apply to them.
\end{remark}

\begin{remark}
\label{rem:pickel}
The finitely generated nilpotent group $Q_r = G/W(G)\gamma_{r+1}(G)$ is
\emph{not} itself determined by $\widehat G$: by Pickel's theorem
\cite{Pickel-tams71} it is determined only up to finitely many
possibilities.  This is why Theorem \ref{thm:C} is stated for the graded
pieces, which are abelian, and it is the reason Rybnikov's separation of
$G/\gamma_4$ \emph{as a group} \cite{Rybnikov-faa11} does not by itself
yield a profinite separation.  The same applies to the $\Lambda$-module
structure of a truncated Alexander invariant $B(G)/\m^{k}B(G)$, all of
whose graded pieces are determined by Corollary
\ref{cor:chen-profinite}: Corollary \ref{cor:acgm} exhibits two
arrangement groups with isomorphic completions whose truncations at $k=2$
are not isomorphic compatibly with the meridians.  What Theorem \ref{thm:C} 
needs, and what these lack, is that a finitely generated abelian group, unlike 
a nilpotent group or a $\Lambda$-module of finite rank, is recovered from its 
tensor product with $\widehat\Z$.
\end{remark}

\section{Arrangement groups: what the completion sees}
\label{sect:arrangements}

Let $\A$ be an arrangement of hyperplanes in $\C^\ell$ with complement
$M(\A)$ and group $G(\A) = \pi_1(M(\A))$, and write
$\VV^q_s(\A) = \VV^q_s(M(\A),\C)$.  When $\A$ is an arrangement of lines
in $\mathbb{CP}^2$, as in all the examples below, $M(\A)$ denotes its
complement in $\mathbb{CP}^2$, which is also the complement of any decone
of $\A$ in $\C^2$; the complement of the cone of $\A$ in $\C^3$ is
$M(\A) \times \C^\times$, with group $G(\A) \times \Z$, and the examples
are stated for $G(\A)$ itself, as in their sources.  Since $M(\A)$ is a smooth
quasi-projective variety, Hypothesis $\mathrm{T}_k$ holds for every $k$, so
Theorem \ref{thm:A} applies as soon as $\CH_k$ does --- always for
$k=1$, by Theorem \ref{thm:B}\ref{B-one}, and in all degrees for the classes
covered by Corollary \ref{cor:fibertype} and the discussion following it.
Theorem \ref{thm:C} needs no hypothesis at all.  Together they say a good
deal about what is invariant under a change of realization; the two
examples of \S\ref{subsec:witnesses} show that they say no more.

\subsection{Galois conjugation}
\label{subsec:galois}

Conjugate varieties have isomorphic profinite completions, so all three
theorems apply to a Galois orbit at once.

\begin{corollary}
\label{cor:galois}
Let $\A$ be defined over a number field $K$, let $\sigma$ be a field
automorphism of $\C$, and write $G = G(\A)$, $G^\sigma = G(\A^\sigma)$.
\begin{enumerate}[label=\textup{(\roman*)}, itemsep=1.5pt]
\item \label{gal-cv}
$\VV^1_s(\A^\sigma) = \VV^1_s(\A)$ for all $s \ge 1$, under the
identification of character tori induced by $H \mapsto H^\sigma$; and
$\VV^q_s(\A^\sigma) = \VV^q_s(\A)$ for all $q$ and $s$ if $\A$ falls under
Corollary \ref{cor:fibertype} or the cases following it.
\item \label{gal-lcs}
$\gr_r(G^\sigma) \cong \gr_r(G)$ for every $r \ge 1$, torsion included.
\item \label{gal-chen}
$\gr_r(G^\sigma/(G^\sigma)'') \cong \gr_r(G/G'')$ for every $r \ge 1$,
torsion included; and more generally
$\gr_r(G^\sigma/W(G^\sigma)) \cong \gr_r(G/W(G))$ for every set of words
$W$.
\end{enumerate}
\end{corollary}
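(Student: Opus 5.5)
The plan is to get a single isomorphism $\phi\colon \widehat{G}\isom\widehat{G^\sigma}$ out of Galois conjugation, then feed it to Theorems \ref{thm:A}, \ref{thm:B} and \ref{thm:C}. The one external input is Grothendieck's comparison theorem for étale fundamental groups. The complement $M(\A)$ is a smooth variety defined over $K$. For any $\sigma\in\operatorname{Aut}(\C)$, base change along $\sigma$ gives an isomorphism of schemes $M(\A)\times_{\C,\sigma}\C\cong M(\A^\sigma)$; this is an isomorphism of abstract schemes, not of $\C$-schemes. The étale fundamental group is invariant under such a scheme isomorphism, so $\pi_1^{\mathrm{et}}(M(\A))\cong\pi_1^{\mathrm{et}}(M(\A^\sigma))$. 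The Riemann existence theorem identifies each side with $\widehat G$ and $\widehat{G^\sigma}$, and this yields $\phi$.

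Parts \ref{gal-lcs} and \ref{gal-chen} then need nothing further. Both groups are finitely presented, so Theorem \ref{thm:C} applies with $W=\varnothing$, with $W=\{[[x_1,x_2],[x_3,x_4]]\}$, and with arbitrary $W$, exactly as in Corollaries \ref{cor:lcs-profinite} and \ref{cor:chen-profinite}.

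For part \ref{gal-cv}, Hypothesis $\mathrm{T}_k$ holds for both complements by Budur--Wang, and $\CH_1(\phi)$ holds by Theorem \ref{thm:B}\ref{B-one}. In the fiber-type, aspherical and good cases, $\CH_k(\phi)$ holds for all $k$ by Theorem \ref{thm:B}\ref{B-all}. These cases are closed under conjugation: fiber-type is a lattice condition, the intersection lattice is preserved by $\sigma$, and being a reflection arrangement of a given type is preserved as well. So Theorem \ref{thm:A}\ref{A-loci} gives the equality of loci once $\phi$ is rigidified by $\psi\colon G_{\ab}\to G^\sigma_{\ab}$, where $\psi$ sends the meridian $x_H$ to $x_{H^\sigma}$.

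The main obstacle is this rigidification, that is, computing the map $\phi$ induces on $H_1(-;\widehat\Z)=\bigoplus_H\widehat\Z$. My approach is to identify $H^1_{\mathrm{et}}(M(\A),\widehat\Z(1))$ with the Kummer classes of the defining linear forms $f_H$, i.e.\ $\bigoplus_H\widehat\Z\cdot[f_H]$. A conjugation isomorphism sends $[f_H]$ to $[f_H^\sigma]$. The subtlety is the Tate twist: $\sigma$ acts on $\widehat\Z(1)=\varprojlim\mu_n$ through the cyclotomic character $\chi(\sigma)\in\widehat\Z^\times$. Untwisting to $\widehat\Z$-coefficients therefore multiplies each meridian by a common unit, giving $\phi_*(x_H)=\chi(\sigma)^{\pm1}x_{H^\sigma}$. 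This is precisely the form $u\cdot(\psi\otimes\widehat\Z)$ with $u=\chi(\sigma)^{\pm1}$ required in Remark \ref{rem:rigidify}. The sanity check is $\C^\times$ under complex conjugation, where $u=-1$ and the map is inversion, as the Remark predicts. With the rigidification established, Theorem \ref{thm:A}\ref{A-loci} yields $\psi^*\VV^q_s(\A^\sigma)=\VV^q_s(\A)$ in the stated degrees, which is the claimed equality under the identification $H\mapsto H^\sigma$.
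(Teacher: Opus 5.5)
Your proposal is correct and follows the paper's proof. The isomorphism $\widehat{G}\cong\widehat{G^\sigma}$ comes from conjugation-invariance of the \'etale fundamental group together with Riemann existence. Parts (ii)--(iii) are Theorem \ref{thm:C}. Part (i) comes from Theorems \ref{thm:A} and \ref{thm:B}, after rigidifying by the meridian matching, with the Kummer classes of the $f_H$ and the cyclotomic character producing the unit. The paper takes $u=\chi(\sigma)$; your $\pm1$ hedge is harmless, since Remark \ref{rem:rigidify} allows any unit. Your check that the hypotheses of Theorem \ref{thm:B}\ref{B-all} also hold for $\A^\sigma$ is a point the paper leaves implicit.
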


\begin{proof}
Conjugate varieties have isomorphic algebraic fundamental groups, and for a
complex variety of finite type the algebraic fundamental group is the
profinite completion of the topological one (Riemann existence).  Hence
$\widehat{G} \cong \widehat{G^\sigma}$.  For \ref{gal-cv}, let $\psi$
match the meridian bases, $\psi(\gamma_H) = \gamma_{H^\sigma}$, and let
$\chi \colon \mathrm{Aut}(\C) \to \widehat\Z^{\times}$ be the cyclotomic
character, $\sigma(\zeta) = \zeta^{\chi(\sigma)}$ for roots of unity
$\zeta$.  The isomorphism of completions induced by $\sigma$ matches the
Kummer class of a defining form $f_H$ in $H^1(M(\A);\mu_n)$ with that of
$f_H^\sigma$, while the
identifications $\mu_n \cong \Z_n$ by $e^{2\pi i/n}$ on the two sides differ
by $\sigma$; so on $H_1 \otimes \widehat\Z$ it is
$\chi(\sigma)\cdot(\psi\otimes\widehat\Z)$, and $\psi$ rigidifies it in the
sense of Remark \ref{rem:rigidify}, with $u = \chi(\sigma)$.
Theorems \ref{thm:A} and \ref{thm:B} then apply.  Parts
\ref{gal-lcs} and \ref{gal-chen} are Theorem \ref{thm:C}, which requires no
rigidification.
\end{proof}

\begin{corollary}
\label{cor:no-go}
No arithmetic Zariski pair (a pair of arrangements with isomorphic intersection 
lattices whose members are Galois conjugate) can be distinguished by any
of the following: the characteristic varieties $\VV^1_s$; the first Betti
number of any finite cover of the complement; the resonance
varieties; the lower central series ranks, or the torsion in $\gr_r(G(\A))$;
the Chen ranks, or the torsion in the Chen groups; the graded pieces of a
truncated Alexander invariant.  An invariant that does distinguish such a
pair is therefore either an invariant of $G(\A)$ not determined by
$\widehat{G(\A)}$, or not an invariant of $G(\A)$ at all.
\end{corollary}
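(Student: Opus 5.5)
The plan is to reduce every listed invariant to something Corollary \ref{cor:galois} already controls. Let $\A$ and $\A^\sigma$ be the two members of an arithmetic Zariski pair. Since both are defined over a number field and are Galois conjugate, the proof of Corollary \ref{cor:galois} gives an isomorphism $\widehat{G(\A)} \cong \widehat{G(\A^\sigma)}$. Moreover, that isomorphism is rigidified by the meridian correspondence $\psi(\gamma_H)=\gamma_{H^\sigma}$, with unit $u=\chi(\sigma)$. I would then go down the list, attaching each item to one of Theorems \ref{thm:A}, \ref{thm:B}, \ref{thm:C}.

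\emph{Characteristic varieties.} Complements are smooth quasi-projective, so $\mathrm{T}_1$ holds. Theorem \ref{thm:B}\ref{B-one} supplies $\CH_1$, and Theorem \ref{thm:A}\ref{A-loci} then gives $\psi^*\VV^1_s(\A^\sigma)=\VV^1_s(\A)$; this is Corollary \ref{cor:galois}\ref{gal-cv}.

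\emph{First Betti numbers of finite covers.} I would apply Lemma \ref{lem:fi}\ref{fi-det} to all finite-index subgroups, not just the cyclic ones. The bijection of finite-index subgroups induced by $\phi$ yields isomorphisms $(N_1)_{\ab}\cong(N_2)_{\ab}$, and $H_1$ of a finite cover is $N_{\ab}$.

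\emph{Resonance varieties.} Here I would use the tangent cone theorem for quasi-projective varieties (or for arrangements directly): $\RR^1_s(\A)$ is the tangent cone at $\mathbf 1$ of $\VV^1_s(\A)$. Since $\psi^*$ is an isomorphism of tori induced by a lattice isomorphism $\psi$, its differential carries the one tangent cone onto the other.

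\emph{Lower central series and Chen groups.} Corollaries \ref{cor:lcs-profinite} and \ref{cor:chen-profinite} give the ranks and the torsion of $\gr_r(G)$ and of the Chen groups. For the graded pieces of the truncated Alexander invariant I would use Massey's identification $\gr_{j+2}(G/G'')\cong \m^jB/\m^{j+1}B$, quoted before Corollary \ref{cor:chen-profinite}.

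The final sentence is then a formal dichotomy. Suppose an invariant separates the pair. If it is an invariant of $G(\A)$, it cannot factor through $\widehat{G(\A)}$, because the completions agree. Otherwise it is not an invariant of the group at all.

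The main obstacle is the resonance item. It is the only one that needs an input not stated in the paper, namely the tangent cone theorem (Dimca--Papadima--Suciu), or alternatively the observation that resonance is determined by the combinatorics for arrangements. The second route is actually simpler, since the pair is lattice-isomorphic. I would state whichever citation is cleaner and note that resonance is in any case combinatorial, so this item is immediate.
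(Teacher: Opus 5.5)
Your proposal is correct and follows the paper's proof item by item: Corollary~\ref{cor:galois} for $\VV^1_s$, Lemma~\ref{lem:fi}\ref{fi-det} for all finite covers, Corollaries~\ref{cor:lcs-profinite} and~\ref{cor:chen-profinite} together with Massey's identification for the lower central series, Chen and truncated Alexander data, and, for resonance, exactly your second route, namely that it is determined by the lattice outright. Keep that second route. It handles resonance in every degree. The tangent cone route reaches only $\RR^1_s$, and it rests on $1$-formality, which arrangement complements have; quasi-projectivity alone does not suffice for the tangent cone theorem.
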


\begin{proof}
The listed invariants are all determined by $\widehat{G(\A)}$.  The
characteristic varieties are, by Corollary
\ref{cor:galois}\ref{gal-cv}; the lower central series and Chen data by
\ref{gal-lcs} and \ref{gal-chen}, together with the identification of the
graded pieces of a truncated Alexander invariant with the Chen groups; the
resonance varieties are determined by $L_{\le 2}(\A)$ outright.  For the
covers, the first Betti number of a finite cover of $M(\A)$ is the rank of
$N_{\ab}$ for the corresponding finite-index subgroup $N$, and these are
matched along $\widehat{G} \cong \widehat{G^\sigma}$ by Lemma
\ref{lem:fi}\ref{fi-det}.
\end{proof}

Both alternatives in the last sentence of Corollary \ref{cor:no-go}
occur, and for the same arrangements.  For the
eleven-line arithmetic pair of \cite[\S4.2]{Guerville-Balle-mathz22}, the
groups are separated by the module structure of a truncated Alexander
invariant \cite[Thm.~4.8]{Guerville-Balle-mathz22}, which the completion
does not determine (Remark \ref{rem:acgm-mechanism}); the complements are
separated by the loop-linking number, computed from the boundary manifold
\cite{Hironaka-mathann01, Cohen-Suciu-proc08,
Florens-Guerville-Marco-camb15}, which under mild combinatorial conditions
is an invariant of the homeomorphism type of the complement but not always
of its homotopy type \cite[Thm.~2.7, Cor.~5.14]{Guerville-Balle-mathz22},
hence not an invariant of $G(\A)$.

\begin{corollary}
\label{cor:search}
Suppose $\A_1$ and $\A_2$ are arrangements with
$L_{\le 2}(\A_1) \cong L_{\le 2}(\A_2)$, and suppose that
$\VV^1_s(\A_1) \ne \VV^1_s(\A_2)$ for some $s$, the two being compared
under the identification of character tori induced by the resulting
bijection of hyperplanes.  Then
$\widehat{G(\A_1)} \not\cong \widehat{G(\A_2)}$ by any isomorphism
rigidified by that bijection.
\end{corollary}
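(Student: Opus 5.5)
The corollary is the contrapositive of Theorem \ref{thm:A}\ref{A-loci} in degree one, with $\CH_1$ supplied by Theorem \ref{thm:B}\ref{B-one}. So I would suppose, for a contradiction, that there is an isomorphism $\phi \colon \widehat{G(\A_1)} \isom \widehat{G(\A_2)}$ rigidified by the isomorphism $\psi \colon G(\A_1)_{\ab} \to G(\A_2)_{\ab}$ that comes from the bijection of hyperplanes. Concretely, $\psi$ sends the meridian $\gamma_H$ to $\gamma_{H'}$, where $H \mapsto H'$ is the bijection determined by $L_{\le 2}(\A_1) \cong L_{\le 2}(\A_2)$. The goal is then to deduce $\psi^{*}\bigl(\VV^1_s(\A_2)\bigr) = \VV^1_s(\A_1)$ for every $s$, which contradicts the hypothesis.

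The steps, in order:

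\begin{enumerate}[label=(\arabic*)]
\item Check that $\psi$ is an isomorphism of abelian groups. This holds because $H_1(M(\A_i);\Z)$ is free abelian on the meridians. In the projective setting it is the quotient of the free abelian group on the meridians by the sum of all of them. A bijection of hyperplanes preserves that relation, so $\psi$ is well defined in either convention.

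\item Check that Hypothesis $\mathrm{T}_1$ holds for both $M(\A_i)$. They are smooth quasi-projective varieties, so this is Arapura's theorem, as recalled in \S\ref{subsec:hyps}. Both complements have the homotopy type of finite CW-complexes, so the finite $1$-skeleton requirement is also met.

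\item Invoke Theorem \ref{thm:B}\ref{B-one}: it gives $\CH_1(\phi)$ unconditionally, since arrangement groups are finitely generated.

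\item Apply Theorem \ref{thm:A}\ref{A-loci} with $k=1$ to the rigidified $\phi$. This yields $\psi^{*}\bigl(\VV^1_s(\A_2)\bigr) = \VV^1_s(\A_1)$ for all $s \ge 1$.
\end{enumerate}

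The identification of character tori in the statement is exactly $\psi^{*}$, so the loci coincide, contradicting $\VV^1_s(\A_1) \ne \VV^1_s(\A_2)$. Hence no isomorphism of completions rigidified by that bijection exists.

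The only point that needs care, and the step I expect to be the main obstacle, is the bookkeeping in step (1) and step (4). One must make sure the identification of character tori that the statement has in mind coincides with $\psi^{*}$ for the $\psi$ that does the rigidifying. The direction matters: $\psi^{*}$ maps $\Hom(G_2,\C^\times)$ to $\Hom(G_1,\C^\times)$. Beyond that bookkeeping, the proof is a direct assembly of the earlier results.
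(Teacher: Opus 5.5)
Your proposal is correct and is exactly the argument the paper intends. The paper states this corollary without a separate proof, as the contrapositive of Theorem \ref{thm:A}\ref{A-loci} in degree one, with Hypothesis $\mathrm{T}_1$ supplied by the quasi-projective setting and $\CH_1$ by Theorem \ref{thm:B}\ref{B-one}; your checks that $\psi$ is well defined on meridians and that $\psi^{*}$ is the identification of tori meant in the statement are the right bookkeeping.
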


\begin{remark}
\label{rem:truncated}
Only $L_{\le 2}(\A)$ is used above, and this is natural: by the
Lefschetz-type theorem of Hamm and L\^e, $G(\A)$ is unchanged on
intersecting with a generic $2$-plane, which misses every flat of rank
$\ge 3$, so $G(\A)$, and with it $\VV^1_s(\A)$, is blind to $L(\A)$ beyond
rank two.  For line arrangements, the setting of all the examples below,
$L(\A) = L_{\le 2}(\A)$.
\end{remark}

\subsection{Two witnesses}
\label{subsec:witnesses}

Theorems \ref{thm:A}--\ref{thm:C} would combine with an affirmative answer to
the following question, raised by Suciu in \cite[Ch.~5]{CDFSSTY-pre09}, to
give combinatorial determination of the jump loci in one stroke.

\begin{question}[\cite{CDFSSTY-pre09}]
\label{quest:profinite-comb}
Is the profinite completion of an arrangement group combinatorially
determined?  That is, if $L_{\le 2}(\A) \cong L_{\le 2}(\A')$, is
$\widehat{G(\A)} \cong \widehat{G(\A')}$?
\end{question}

The answer is negative, and the witness is at hand.

\begin{corollary}
\label{cor:agv}
Let $\A^{\pm}$ be the pair of $13$-line arrangements of Artal Bartolo,
Guerville-Ball\'e and Viu-Sos \textup{\cite{Artal-Guerville-ViuSos-expmath20}}, 
for which $L_{\le2}(\A^+) \cong L_{\le2}(\A^-)$, while
$\gr_4(G^+) \cong \Z^{211} \oplus \Z/2$ and $\gr_4(G^-) \cong \Z^{211}$.
Then $\widehat{G^+} \not\cong \widehat{G^-}$.  Consequently
$\widehat{G(\A)}$ is not determined by $L_{\le2}(\A)$; that is, Question
\ref{quest:profinite-comb} has a negative answer.
\end{corollary}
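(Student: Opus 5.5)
The argument is a direct application of Corollary \ref{cor:lcs-profinite}, that is, of Theorem \ref{thm:C} with $W=\varnothing$, read contrapositively. Suppose, for a contradiction, that $\widehat{G^+}\cong\widehat{G^-}$ as profinite groups, by any isomorphism whatever; no rigidification is needed, since Theorem \ref{thm:C} requires none. Both groups are finitely generated, being fundamental groups of complements of finite line arrangements in $\mathbb{CP}^2$, which have the homotopy type of finite CW-complexes. So Corollary \ref{cor:lcs-profinite} applies and gives $\gr_4(G^+)\cong\gr_4(G^-)$ as abelian groups. The computation of \cite{Artal-Guerville-ViuSos-expmath20} says the first group has $2$-torsion $\Z/2$ and the second is torsion-free. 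This contradicts the uniqueness of the decomposition of a finitely generated abelian group into free and torsion parts. Hence $\widehat{G^+}\not\cong\widehat{G^-}$.

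Concretely, the proof runs through Theorem \ref{thm:verbal}. The closed series $\Gamma_r(\widehat G)=\overline{\gamma_r(\widehat G)}$ is intrinsic to the profinite group. Its graded piece $\Gamma_4/\Gamma_5$ is $\gr_4(G)\otimes\widehat\Z$. For $G^+$ this is $\widehat\Z^{211}\oplus\Z/2$, which has nontrivial torsion, while for $G^-$ it is $\widehat\Z^{211}$, which is torsion-free since $\widehat\Z$ is. An isomorphism of completions would carry one filtration onto the other, which is impossible. This step is routine once the torsion computation is granted.

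For the consequence, combine this with the hypothesis $L_{\le2}(\A^+)\cong L_{\le2}(\A^-)$, which the pair satisfies by construction. We then have two arrangements with isomorphic truncated lattices whose group completions are non-isomorphic. So $\widehat{G(\A)}$ is not a function of $L_{\le2}(\A)$, and Question \ref{quest:profinite-comb} has a negative answer. For line arrangements, Remark \ref{rem:truncated} notes that $L=L_{\le2}$, so even the full lattice does not determine the completion.

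The only genuine input is the external computation of $\gr_4$, torsion included, from \cite{Artal-Guerville-ViuSos-expmath20}. I take this as given. The one point to check is convention: the cited groups must be $G(\A^\pm)$ for the projective complements, not the groups of the cones. This is harmless, since for a cone the group is $G\times\Z$, and $\gr_r(G\times\Z)=\gr_r(G)$ for $r\ge2$, so the torsion discrepancy in $\gr_4$ persists in either convention.
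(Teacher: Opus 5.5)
Your proof is correct and is essentially the paper's: an isomorphism $\widehat{G^+}\cong\widehat{G^-}$ would, by Corollary \ref{cor:lcs-profinite} (Theorem \ref{thm:C} with $W=\varnothing$), force $\gr_4(G^+)\cong\gr_4(G^-)$, contradicting the torsion computation of \cite{Artal-Guerville-ViuSos-expmath20}. Your two additions are accurate but go beyond what the paper's one-line proof needs: the unwinding through $\Gamma_4/\Gamma_5\cong\gr_4(G)\otimes\widehat\Z$ via Theorem \ref{thm:verbal}, and the check that passing to the cone group $G\times\Z$ leaves $\gr_4$ unchanged.
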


\begin{proof}
An isomorphism of completions would force $\gr_4(G^+) \cong \gr_4(G^-)$, by
Corollary \ref{cor:lcs-profinite}.
\end{proof}

The pair of \cite{Artal-Guerville-ViuSos-expmath20} answered in the negative
the question \cite[Question~8.7]{Suciu-conm01} of whether the torsion in
$\gr_r(G(\A))$ is combinatorially determined, the ranks being so by
formality \cite[\S5.4]{Suciu-bullroum25}.  Theorem \ref{thm:C} adds that
this torsion is a profinite invariant, which gives a conclusion strictly
stronger than the $G^+ \not\cong G^-$ of
\cite[Cor.~3.7]{Artal-Guerville-ViuSos-expmath20}.

The second witness bounds the method from the other side.

\begin{corollary}
\label{cor:acgm}
Let $\A_{\xi}$ and $\A_{\xi^2}$ be the realizations of the twelve-point 
matroid $\mathcal G_{91}$ of \cite{Guerville-Balle-top16, ACGM-racsam17} 
attached to a primitive fifth root of unity $\xi$ and to $\xi^{2}$, so that the two 
line arrangements are related by the Galois automorphism 
$\zeta_5 \mapsto \zeta_5^{2}$ of $\Q(\zeta_5)$.  Then
\[
\widehat{G(\A_{\xi})} \cong \widehat{G(\A_{\xi^2})}
\qquad\text{while}\qquad
G(\A_{\xi}) \not\cong G(\A_{\xi^2}) .
\]
In particular, $G(\A)$ is not determined by $\widehat{G(\A)}$, and every
invariant listed in Corollary \ref{cor:no-go} agrees on this pair, whose 
groups differ.
\end{corollary}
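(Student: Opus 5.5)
The corollary has two halves, and I would prove them separately: the isomorphism of completions comes from the general machinery already in place, and the non-isomorphism of the discrete groups is imported from the literature.

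For the first half, I would note that $\A_\xi$ and $\A_{\xi^2}$ are defined over the number field $K=\Q(\zeta_5)$. Extend the automorphism $\zeta_5\mapsto\zeta_5^2$ of $K$ to a field automorphism $\sigma$ of $\C$; then $\A_{\xi^2}=\A_\xi^\sigma$. The first step of the proof of Corollary \ref{cor:galois} then gives $\widehat{G(\A_\xi)}\cong\widehat{G(\A_\xi^\sigma)}$. That step uses the invariance of the algebraic fundamental group of $M(\A)$ under conjugation, together with Riemann existence. The one point to check is that the complements of the projective line arrangements are conjugate as varieties. This is clear, since $\sigma$ acts on the defining linear forms.

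The last assertion then follows from Corollary \ref{cor:no-go}: every listed invariant is determined by the completion, so all of them agree on this pair.

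For the non-isomorphism $G(\A_\xi)\not\cong G(\A_{\xi^2})$, I would invoke the main result of \cite{ACGM-racsam17}. There the groups are distinguished by the $\Lambda$-module structure of the truncated Alexander invariant $B(G)/\m^2B(G)$, compared compatibly with the meridians. The argument has two parts:
\begin{itemize}
\item The combinatorial symmetry of $\mathcal G_{91}$ controls the possible isomorphisms on $H_1$: any isomorphism of the groups would induce a map on $H_1$ carrying meridians to meridians up to sign and lattice automorphisms.
\item An isomorphism of the groups would therefore induce an isomorphism of these truncated modules respecting meridian bases, which the computation in \cite{ACGM-racsam17} excludes.
\end{itemize}
I would cite this rather than reprove it.

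The main obstacle is the meridian-compatibility step, that is, ensuring that every abstract isomorphism of the groups is compatible with meridians up to the permitted ambiguity. This is exactly where \cite{ACGM-racsam17} does its work, and I would rely on their statement that the groups themselves are non-isomorphic.

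Finally, I would remark why this is consistent with Theorem \ref{thm:C} and Remark \ref{rem:pickel}. The completion isomorphism is rigidified only up to the cyclotomic unit $u=\chi(\sigma)$, which is $2$ modulo $5$. Such a twist of meridians is invisible in the graded pieces but is not realized by any isomorphism over $\Z$.
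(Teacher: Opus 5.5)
Your proposal is correct and takes essentially the same approach as the paper. The isomorphism of completions comes from Galois conjugacy, exactly as in the first step of the proof of Corollary \ref{cor:galois}, and the non-isomorphism of the groups is cited from \cite[Thm.~4.5]{ACGM-racsam17}, with your account of its mechanism (homological rigidity, then the meridian-compatible obstruction in $B(G)/\m^2B(G)$) matching Remark \ref{rem:acgm-mechanism}.
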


\begin{proof}
The two arrangements are Galois conjugate, so the completions of their groups 
are isomorphic as in the proof of Corollary \ref{cor:galois}; this is recorded 
already in \cite[\S1]{Guerville-Balle-top16} and in the abstract of
\cite{ACGM-racsam17}.  The groups are non-isomorphic by
\cite[Thm.~4.5]{ACGM-racsam17}.
\end{proof}

\begin{remark}
\label{rem:acgm-mechanism}
The two witnesses separate arrangement groups in ways that Theorem
\ref{thm:C} treats oppositely.  In \cite{Artal-Guerville-ViuSos-expmath20}
the discrepancy is a $\Z/2$ in the abelian group $\gr_4$, which Theorem
\ref{thm:C} converts into a profinite invariant.  In \cite{ACGM-racsam17}
the graded pieces of the truncated Alexander invariant agree, as Corollary
\ref{cor:chen-profinite} requires, while the module $B(G)/\m^2B(G)$ admits
no isomorphism compatible with the meridians: the relevant inhomogeneous
integral linear system is solvable over $\Z[1/5]$ but not over $\Z$
\cite[Prop.~4.3]{ACGM-racsam17}, and homological rigidity
\cite[Prop.~2.8]{ACGM-racsam17} upgrades this to
$G(\A_\xi) \not\cong G(\A_{\xi^2})$.  By the local--global principle for
such systems there is no solution over $\Z_5$ either, so, running the
procedure of \cite[\S4.2]{ACGM-racsam17} over $\widehat\Z$, no isomorphism
of completions induces the identity on $H_1 \otimes \widehat\Z$: the
obstruction does not dissolve upon completion.  It is evaded instead, the
isomorphism of Corollary \ref{cor:galois} being compatible with the
meridians only up to the unit $u = \chi(\sigma) \equiv 2 \bmod 5$, a scalar
no isomorphism of groups can produce.  To our knowledge,
\cite{Artal-Guerville-ViuSos-expmath20} remains the only profinite
separation of two lattice-isomorphic arrangement groups; the separations
of Rybnikov's pair \cite{Rybnikov-faa11, ARCM-aspm07} are statements over
$\Z$ not known to survive completion (Remark \ref{rem:pickel}).
\end{remark}

\begin{remark}
\label{rem:acgm-rf}
Recall that a finitely generated, residually finite group $G$ is
\emph{profinitely rigid} if every finitely generated, residually finite
group $H$ with $\widehat H \cong \widehat G$ is isomorphic to $G$
\cite{Reid-icm18}.  Residual finiteness is part of the notion, which would
otherwise fail for trivial reasons: if $S$ is an infinite, finitely
generated simple group, then $G \times S$ has the same completion as $G$.
Since residual finiteness of arrangement groups is open (Question
\ref{quest:res-finite}), Corollary \ref{cor:acgm} does not by itself show
that $G(\A_\xi)$ fails to be profinitely rigid.  It does show this for the
residually finite quotients of Lemma \ref{lem:rf-kernel}.  The method of
\cite{ACGM-racsam17} needs only $\gr_2(G) \cong \bigwedge^2 H_1/H_2$, for
homological rigidity \cite[Def.~2.1, Rems.~2.2, 3.9]{ACGM-racsam17}, and
$B(G)/\m^2B(G) = G'/G''\gamma_4(G)$ with its meridian structure; both are
read off $G/G''\gamma_4(G)$, which by Lemma \ref{lem:rf-kernel} is
unchanged in $G^{\rf}$, and the complex-conjugation step of
\cite[Thm.~4.5]{ACGM-racsam17} passes to $G^{\rf}$ by functoriality.
Hence $G(\A_\xi)^{\rf}$ and $G(\A_{\xi^2})^{\rf}$ are non-isomorphic,
finitely generated, residually finite groups with isomorphic completions,
and neither is profinitely rigid; if either arrangement group is
residually finite, then it is not profinitely rigid itself.  The same
method applies to the Galois orbits of eleven-line arrangements over
$\Q(\zeta_5)$ and of twelve-line arrangements over $\Q(\zeta_7)$
constructed by Guerville-Ball\'e, whose members have isomorphic groups
only when complex conjugate \cite[Thms.~4.8, 4.16]{Guerville-Balle-mathz22};
the latter family yields three pairwise non-isomorphic arrangement groups
with isomorphic profinite completions, and likewise for their residually
finite quotients.
\end{remark}

\subsection{The search for a counterexample}
\label{subsec:search}

By Corollaries \ref{cor:galois} and \ref{cor:search}, a counterexample to
the combinatorial determination of $\VV^1$ must be a pair with
non-isomorphic profinite completions, and in particular cannot be
arithmetic.  This excludes the MacLane pair, the double-star arrangement of
Liu--Xie \cite{Liu-Xie-blms25}, and the twelve-line arrangements of
\cite{Guerville-Balle-top16} --- the last not vacuously, by Corollary
\ref{cor:acgm}.  It does not exclude Rybnikov's original pair
\cite{Rybnikov-faa11}, which is nonarithmetic
\cite[Ex.~1.3(2)]{Guerville-Balle-arx24} and remains a live candidate, nor
the nonarithmetic pairs produced on demand by Guerville-Ball\'e's
splitting-polygon method \cite{Guerville-Balle-arx24}, including one 
defined over $\Q$ on both sides
\cite[Thm.~3.5]{Guerville-Balle-arx24}.

\begin{remark}
\label{rem:through-one}
Such a counterexample must differ in translated components or in isolated
torsion points.  Indeed, the components of $\VV^1_s(\A)$ through
$\mathbf 1$ are combinatorially determined: by formality and the tangent
cone theorem \cite[Thm.~A]{Dimca-Papadima-Suciu-duke09}, they are the
subtori $\exp(L)$ for $L$ a component of $\RR^1_s(\A)$, and $\RR^1_s(\A)$
is determined by the Orlik--Solomon algebra, its components corresponding
to multinets on subarrangements \cite{Falk-Yuzvinsky-comp07}.
\end{remark}

\begin{question}
\label{quest:cv-comb}
Is $\VV^1_s(\A)$ determined by $L_{\le 2}(\A)$?
\end{question}

\renewcommand{\MR}[1]{\href{https://mathscinet.ams.org/mathscinet-getitem?mr=#1}{MR~#1}}


\begin{thebibliography}{10}

\bibitem{Arapura-jag97}
D.~Arapura, \emph{Geometry of cohomology support loci for local systems.
  \textup{I}}, J. Algebraic Geom. \textbf{6} (1997), no.~3, 563--597.
  \MR{1487227}

\bibitem{ARCM-aspm07}
E.~Artal~Bartolo, J.~Carmona~Ruber, J.~I. Cogolludo~Agust\'in, and M.~A.
  Marco~Buzun\'ariz, \emph{Invariants of combinatorial line arrangements and
  {R}ybnikov's example}, Singularity Theory and its Applications, Adv. Studies
  in Pure Math., vol.~43, Math. Soc. Japan, 2007, pp.~1--34. \MR{2313406}

\bibitem{ACGM-racsam17}
E.~Artal~Bartolo, J.~I. Cogolludo-Agust\'in, B.~Guerville-Ball\'e, and
  M.~Marco-Buzun\'ariz, \emph{An arithmetic {Z}ariski pair of line arrangements
  with non-isomorphic fundamental group}, Rev. R. Acad. Cienc. Exactas F\'is.
  Nat. Ser. A Mat. RACSAM \textbf{111} (2017), no.~2, 377--402. \MR{3623047}

\bibitem{Artal-Guerville-ViuSos-expmath20}
E.~Artal~Bartolo, B.~Guerville-Ball\'e, and J.~Viu-Sos, \emph{Fundamental
  groups of real arrangements and torsion in the lower central series
  quotients}, Exp. Math. \textbf{29} (2020), no.~1, 28--35. \MR{4067904}

\bibitem{Bessis-ann15}
D.~Bessis, \emph{Finite complex reflection arrangements are {$K(\pi,1)$}}, Ann.
  of Math. (2) \textbf{181} (2015), no.~3, 809--904. \MR{3296817}

\bibitem{Budur-Wang-ens15}
N.~Budur and B.~Wang, \emph{Cohomology jump loci of quasi-projective
  varieties}, Ann. Sci. \'{E}cole Norm. Sup. \textbf{48} (2015), no.~1,
  227--236. \MR{3335842}

\bibitem{CDFSSTY-pre09}
D.~C. Cohen, G.~Denham, M.~J. Falk, H.~Schenck, A.~I. Suciu, H.~Terao, and
  S.~Yuzvinsky, \emph{Complex arrangements: algebra, geometry, topology}, draft
  monograph, 2009.

\bibitem{Cohen-Suciu-proc08}
D.~C. Cohen and A.~I. Suciu, \emph{The boundary manifold of a complex line
  arrangement}, Groups, homotopy and configuration spaces, Geom. Topol.
  Monogr., vol.~13, Geom. Topol. Publ., Coventry, 2008, pp.~105--146.
  \MR{2508203}

\bibitem{Deligne-inv72}
P.~Deligne, \emph{Les immeubles des groupes de tresses
  g\'{e}n\'{e}ralis\'{e}s}, Invent. Math. \textbf{17} (1972), 273--302.
  \MR{0422673}

\bibitem{Denham-Suciu-plms14}
G.~Denham and A.~I. Suciu, \emph{Multinets, parallel connections, and {M}ilnor
  fibrations of arrangements}, Proc. Lond. Math. Soc. \textbf{108} (2014),
  no.~6, 1435--1470. \MR{3218315}

\bibitem{Dimca-Papadima-Suciu-duke09}
A.~Dimca, S.~Papadima, and A.~I. Suciu, \emph{Topology and geometry of
  cohomology jump loci}, Duke Math. J. \textbf{148} (2009), no.~3, 405--457.
  \MR{2527322}

\bibitem{Falk-Randell-inv85}
M.~Falk and R.~Randell, \emph{The lower central series of a fiber-type
  arrangement}, Invent. Math. \textbf{82} (1985), no.~1, 77--88. \MR{0808110}

\bibitem{Falk-Yuzvinsky-comp07}
M.~Falk and S.~Yuzvinsky, \emph{Multinets, resonance varieties, and pencils of
  plane curves}, Compositio Math. \textbf{143} (2007), no.~4, 1069--1088.
  \MR{2339840}

\bibitem{Florens-Guerville-Marco-camb15}
V.~Florens, B.~Guerville-Ball\'e, and M.~A. Marco-Buzunariz, \emph{On complex
  line arrangements and their boundary manifolds}, Math. Proc. Cambridge
  Philos. Soc. \textbf{159} (2015), no.~2, 189--205. \MR{3395367}

\bibitem{Grunewald-JaikinZapirain-Zalesskii-duke08}
F.~Grunewald, A.~Jaikin-Zapirain, and P.~A. Zalesskii, \emph{Cohomological
  goodness and the profinite completion of {B}ianchi groups}, Duke Math. J.
  \textbf{144} (2008), no.~1, 53--72. \MR{2429321}

\bibitem{Guerville-Balle-top16}
B.~Guerville-Ball\'{e}, \emph{An arithmetic {Z}ariski {$4$}-tuple of twelve
  lines}, Geometry \& Topology \textbf{20} (2016), no.~1, 537--553.
  \MR{3470721}

\bibitem{Guerville-Balle-mathz22}
\bysame, \emph{The loop linking number of line arrangements}, Math. Zeit.
  \textbf{301} (2022), no.~2, 1821--1850. \MR{4418338}

\bibitem{Guerville-Balle-arx24}
\bysame, \emph{On the nonconnectedness of moduli spaces of arrangements, {II}:
  construction of nonarithmetic pairs}, arXiv:2409.18022, 2024.

\bibitem{Hall-plms59}
P.~Hall, \emph{On the finiteness of certain soluble groups}, Proc. London Math.
  Soc. (3) \textbf{9} (1959), 595--622. \MR{110750}

\bibitem{Hironaka-fourier97}
E.~Hironaka, \emph{Alexander stratifications of character varieties}, Ann.
  Inst. Fourier (Grenoble) \textbf{47} (1997), no.~2, 555--583. \MR{1450425}

\bibitem{Hironaka-mathann01}
\bysame, \emph{Boundary manifolds of line arrangements}, Math. Annalen
  \textbf{319} (2001), no.~1, 17--32. \MR{1812817}

\bibitem{Koberda-Suciu-ccm20}
T.~Koberda and A.~I. Suciu, \emph{Residually finite rationally {$p$} groups},
  Commun. Contemp. Math \textbf{22} (2020), no.~3, 1950016 (44 pages).
  \MR{4082220}

\bibitem{Liu-Xie-blms25}
Y.~Liu and W.~Xie, \emph{Double star arrangement and the pointed multinet},
  Bull. London Math. Soc. \textbf{57} (2025), no.~7, 2210--2218. \MR{4936731}

\bibitem{Marin-aspm12}
I.~Marin, \emph{Galois actions on complex braid groups},
  Galois--{T}eichm\"uller Theory and Arithmetic Geometry, Adv. Stud. Pure
  Math., vol.~63, Math. Soc. Japan, Tokyo, 2012, pp.~337--357. \MR{3051247}

\bibitem{Nikolov-Segal-abb07}
N.~Nikolov and D.~Segal, \emph{On finitely generated profinite groups. {I}.
  {S}trong completeness and uniform bounds}, Ann. of Math. (2) \textbf{165}
  (2007), no.~1, 171--238. \MR{2276769}

\bibitem{Pickel-tams71}
P.~F. Pickel, \emph{Finitely generated nilpotent groups with isomorphic finite
  quotients}, Trans. Amer. Math. Soc. \textbf{160} (1971), 327--341.
  \MR{291287}

\bibitem{Reid-icm18}
A.~W. Reid, \emph{Profinite rigidity}, Proceedings of the {I}nternational
  {C}ongress of {M}athematicians---{R}io de {J}aneiro 2018. {V}ol. {II}.
  {I}nvited lectures, World Sci. Publ., Hackensack, NJ, 2018, pp.~1193--1216.
  \MR{3966805}

\bibitem{Ribes-Zalesskii-10}
L.~Ribes and P.~Zalesskii, \emph{Profinite groups}, second ed., Ergebnisse der
  Mathematik und ihrer Grenzgebiete. 3. Folge. A Series of Modern Surveys in
  Mathematics, vol.~40, Springer-Verlag, Berlin, 2010. \MR{2599132}

\bibitem{Rybnikov-faa11}
G.~Rybnikov, \emph{On the fundamental group of the complement of a complex
  hyperplane arrangement}, Funct. Anal. Appl. \textbf{45} (2011), no.~2,
  137--148. \MR{2848779}

\bibitem{Serre-97}
J.-P. Serre, \emph{Galois cohomology}, Springer Monographs in Mathematics,
  Springer-Verlag, Berlin, 1997. \MR{1466966}

\bibitem{Suciu-conm01}
A.~I. Suciu, \emph{Fundamental groups of line arrangements: Enumerative
  aspects}, Advances in algebraic geometry motivated by physics (Lowell, MA,
  2000) (E.~Previato, ed.), Contemp. Math., vol. 276, Amer. Math. Soc.,
  Providence, RI, 2001, pp.~43--79. \MR{1837109}

\bibitem{Suciu-pisa24}
\bysame, \emph{Alexander invariants and cohomology jump loci in group
  extensions}, Ann. Sc. Norm. Super. Pisa Cl. Sci. (5) \textbf{25} (2024),
  no.~2, 1085--1154. \MR{4778471}

\bibitem{Suciu-bullroum25}
\bysame, \emph{Resonance varieties and {L}ie algebras of matroids and
  hyperplane arrangements}, Bull. Math. Soc. Sci. Math. Roumanie (N.S.)
  \textbf{68} (2025), no.~3, 349–382. \MR{4990670}

\bibitem{Toledo-ihes93}
D.~Toledo, \emph{Projective varieties with non-residually finite fundamental
  group}, Inst. Hautes \'{E}tudes Sci. Publ. Math. (1993), no.~77, 103--119.
  \MR{1249171}

\bibitem{Ye-Zhu-arx25}
S.~Ye and K.~Zhu, \emph{Linearity and virtual poly-freeness of the fundamental
  group of plane curves of degree at most five}, 2025, arXiv:2512.08642.

\bibitem{Zariski-71}
O.~Zariski, \emph{Algebraic surfaces}, second supplemented ed., Ergebnisse der
  Mathematik und ihrer Grenzgebiete, vol.~61, Springer-Verlag, New
  York-Heidelberg, 1971. \MR{469915}

\end{thebibliography}
\end{document}